\numberwithin{equation}{section} 
\newtheorem{lem}{Lemma}[section]
\newtheorem{thm}[lem]{Theorem}
\newtheorem{cor}[lem]{Corollary}
\title{Sobolev Inequality In Manifolds With Lower Quadratic Curvature Decay\thanks{\textbf{Mathematics Subject Classification} ~~35R45; 53C21}}
\date{}
\begin{document}
	\author{Tian Chong\thanks{Supported by the National Natural Science Foundation of China (NSFC) No. 12001360}, Han Luo\thanks{supported by Jiangsu Funding Program for Excellent Postdoctoral Talent (2024ZB650) and the Fundamental Research Funds for the Central Universities (30924010314)}, Lingen Lu\thanks{Corresponding author}}
	 \maketitle

	\pagestyle{plain}
	
	\begin{abstract}
		ABSTRACT. 
		By using the ABP method developed by Cabr\'e and Brendle, we establish some Sobolev inequalities for compact domains and submanifolds in a complete Riemannian manifold with lower quadratic curvature decay.
	\end{abstract}
	
\section{Introduction}

How curvature affects functional inequalities on manifolds is an interesting question in geometry. As one of the most important functional inequalities, Sobolev inequalities relate $L^p$ norms of functions to $L^q$ norms of their gradients. In the early 1970s, a Sobolev inequality for submanifolds of Euclidean space was established independently by Michael and Simon \cite{MS} and by Allard \cite{Al}. Recently, inspired by the ABP method developed by Cabr\'e \cite{Ca}, Brendle \cite{Br1} proved a Sobolev inequality of submanifolds in Euclidean space which implies a sharp isoperimetric inequality for minimal submanifolds in Euclidean space of codimension at most $2$. Later, he \cite{Br2} also generalized the Euclidean case to the manifold with nonnegative curvature. For more about the Sobolev inequality on manifolds, we refer the readers to \cite{BK,Hebey, Jo, MW} and references therein.

The goal of this paper is to derive some Sobolev inequalities for compact domains and submanifolds in a complete Riemannian manifold with lower quadratic curvature decay. In \cite{Lo, Y}, the authors study the finite topological type property of manifold with lower quadratic curvature decay. A striking result of Lemma 2.1 in \cite{LS} implies that there exists a complete metric of quadratic curvature decay for any noncompact manifold. Let $\lambda(t):[0,+\infty)\to[0,+\infty)$ be a nonnegative and nonincreasing continuous function satisfies
\begin{equation}\label{G}
		B:=\varlimsup_{t\to\infty}t^2\lambda(t)<\infty.
\end{equation}
Obviously, the existence of $B$ implies 
\begin{equation}\label{G1}
	b_1:=\int_0^\infty \lambda(t)dt<\infty.
\end{equation}
Set $h_1(t),h_2(t)$ be the unique solutions of
\begin{equation}
\left\{
\begin{aligned}
	&h_1''(t)=\lambda(t)h_1(t),\\
	&h_1(0)=0, h_1'(0)=1,
\end{aligned}
\right.
\quad
\left\{
\begin{aligned}
	&h_2''(t)=\lambda(t)h_2(t),\\
	&h_2(0)=1, h_2'(0)=0.
\end{aligned}
\right.
\label{h}
\end{equation}
By the ODE theory, we know that $h_1(t),h_2(t)$ are exist for all $t\in[0,\infty)$. 

We define a complete noncompact Riemannian manifold $M$ of dimension $n$ with \textbf{lower quadratic Ricci curvature decay} if there is a base point $o\in M$ such that
\begin{equation}\label{Ric}
	\mathrm{Ric}(q)\geq -(n-1)\lambda(d(o,q)),\quad\forall q\in M,
\end{equation}
where $d$ is the distance function of $M$. Then we define the \textbf{asymptotic volume ratio} of $M$ as
$$
\theta:=\lim_{r\to\infty}\frac{|\{q\in M:d(o,q)<r\}|}
{n|B^n|\int_0^rh_1^{n-1}(t)dt},
$$
where $o$ is the base point, $B^n$ is the unit ball in $\mathbb{R}^n$. By Theorem 2.14 in  \cite{PRS}, one can show that
$$
\frac{|\{q\in M:d(o,q)<r\}|}
{n|B^n|\int_0^rh_1^{n-1}(t)dt}
$$
is a nonincreasing function and $\theta\leq 1$. 

Through combining the ABP method developed in \cite{Br2} and the skill of taking out the direction of motion in \cite{Vi} with some comparison theorems, we obtain the following Sobolev type inequality for a compact domain in a Riemannian manifold that has lower quadratic Ricci curvature decay.
	
\begin{thm}\label{thm1.1}
	Let $M$ be a complete noncompact manifold of dimension $n$ with lower quadratic Ricci curvature decay with respect to a base point $o\in M$. Let $\Omega$ be a compact domain in $M$ with boundary $\partial\Omega$ and $f$ be a positive smooth function on $\Omega$. Then
	$$
	\int_{\partial \Omega} f+\int_\Omega |D f|+2(n-1)b_1\int_\Omega f
	\geq n\Big(\frac{|B^n|\theta}{\frac{1+\sqrt{1+4B}}{2}(2e^{r_0b_1}-1)^{n-1}}\Big)^\frac{1}{n}
	\Big(\int_\Omega f^\frac{n}{n-1}\Big)^\frac{n-1}{n},
	$$
	where $r_0=\max\{{d}(o,x)|x\in\Omega\}$, $D$ is Levi-Civita connection of $M$, $\theta$ is the asymptotic volume ratio of $M$ and $B,b_1$ are defined in \eqref{G} and \eqref{G1}.
\end{thm}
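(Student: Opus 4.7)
The strategy adapts Brendle's ABP/optimal-transport argument \cite{Br2} from the nonnegative Ricci setting to the lower quadratic decay regime, incorporating Villani's \cite{Vi} device of separating the direction of transport from its orthogonal complement in order to sharpen the Jacobi-field comparison.

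First, set
\[
c := \frac{1}{\int_\Omega f}\Big(\int_{\partial\Omega} f + \int_\Omega |Df| + 2(n-1)b_1 \int_\Omega f\Big)
\]
and solve the Neumann problem $\mathrm{div}(f\, Du) = c f - |Df| - 2(n-1)b_1 f$ in $\Omega$ with $\langle Du, \nu\rangle = 1$ on $\partial\Omega$; the divergence theorem makes this compatible precisely for the chosen $c$, and standard linear elliptic theory yields a solution $u \in C^{2,\alpha}(\overline{\Omega})$. Next, define the transport map $\Phi(x) := \exp_x(Du(x))$. Following the ABP scheme on a manifold, for each $y$ in a suitable open set $U \subset M$ an infimum problem comparing $u$ against squared distances pulled back by $\exp$ has an interior minimizer, since the Neumann condition $\langle Du, \nu\rangle = 1$ keeps the minimum away from $\partial\Omega$. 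This yields a contact set $A \subset \Omega$ with $\Phi(A) \supset U$ and the standard Hessian/Jacobi positivity inequality at each $x \in A$.

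The core step is the pointwise Jacobian estimate on $A$. Along the geodesic $t \mapsto \exp_x(tDu(x))$, Jacobi-field comparison against the model solutions $h_1, h_2$ from \eqref{h} produces a factorization of $|\det D\Phi(x)|$ into one tangential factor and $(n-1)$ normal factors. The triangle inequality $d(o,\exp_x(tv)) \leq d(o,x) + t$ and the monotonicity of $\lambda$ permit the model bound to be taken over an interval of length at most $r_0 + |Du(x)|$; integrating the corresponding Riccati inequality and using $\int_0^\infty \lambda\, dt = b_1$, the normal factors acquire the bound $(2e^{r_0 b_1} - 1)^{n-1}$, while the tangential factor contributes $\frac{1+\sqrt{1+4B}}{2}$, the positive root of $\alpha(\alpha - 1) = B$ which governs $h_1(t) \sim t^\alpha$ in the extremal case $\lambda(t) = B/t^2$. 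Combining this with AM-GM applied to the Hessian bound, the PDE for $\mathrm{div}(f\, Du)$, and the extra bulk term $2(n-1)b_1 f$ (which absorbs the negative-Ricci contribution to the trace) yields the pointwise upper bound for $|\det D\Phi|$.

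Finally, integrate over $A$: the area formula gives $\int_A |\det D\Phi| \geq |\Phi(A)|$, while the asymptotic volume ratio $\theta$ and the monotonicity from Theorem 2.14 of \cite{PRS} give $|\Phi(A)| \geq |B^n|\theta\cdot (r^*)^n$ for the target radius $r^*$. Inserting the pointwise Jacobian bound yields an inequality of the form
\[
|B^n|\theta\cdot (r^*)^n \leq \frac{1+\sqrt{1+4B}}{2}\cdot (2 e^{r_0 b_1} - 1)^{n-1}\cdot \int_\Omega \Big(\frac{c}{n}\Big)^n f,
\]
and optimizing $r^*$ (by Hölder, converting an $L^1$ bound on $f$ into an $L^{n/(n-1)}$ bound) together with the definition of $c$ produces Theorem \ref{thm1.1}. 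The main obstacle is the third step: extracting the precise constants $\frac{1+\sqrt{1+4B}}{2}$ and $(2e^{r_0 b_1} - 1)^{n-1}$ from the Riccati comparison, and calibrating the extra $2(n-1)b_1 f$ term so that it closes the AM-GM step against the negative Ricci contribution without wasting room.
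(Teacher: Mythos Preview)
Your overall strategy matches the paper's (ABP plus Villani's direction-splitting plus Riccati comparison), but several structural choices as written would not close. First, the Neumann problem in the paper has right-hand side $nf^{n/(n-1)} - 2(n-1)b_1 f - |Df|$, not a constant multiple $cf$; with your right-hand side the pointwise bound becomes $\frac{1}{n}\Delta u \le c/n - \text{const}$, which after AM--GM does not integrate to $\int_\Omega f^{n/(n-1)}$, and your proposed ``H\"older to convert $L^1$ into $L^{n/(n-1)}$'' step cannot recover it. Second, and more seriously, the argument cannot run with a single map $\Phi(x)=\exp_x(Du(x))$. The paper uses the one-parameter family $\Phi_r(x)=\exp_x(rDu(x))$, shows $\{q:d(x,q)<r\ \forall x\in\Omega\}\setminus\Omega\subset\Phi_r(A_r)$, and then sends $r\to\infty$. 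The asymptotic volume ratio $\theta$ only enters in this limit, and the correct comparison volume is $n|B^n|\int_0^r h_1^{n-1}$, not the Euclidean $|B^n|(r^*)^n$ you wrote. Likewise the constant $\tfrac{1+\sqrt{1+4B}}{2}$ is not ``the tangential factor'': it arises as $\varlimsup_{r\to\infty}\frac{rh_1^{n-1}(r)}{n\int_0^r h_1^{n-1}}$ (Lemma~2.6 of the paper), comparing the leftover $r$ from the transport-direction against the $h_1$-volume normalization.

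Two smaller corrections. The triangle inequality goes the other way: since $\lambda$ is nonincreasing, bounding $\lambda(d(o,\bar\gamma(t)))$ from above requires the \emph{lower} bound $d(o,\bar\gamma(t))\ge |d(o,\bar x)-t|Du(\bar x)||$, not the upper bound you wrote. And the factor $(2e^{r_0b_1}-1)^{n-1}$ is not obtained by integrating the Riccati inequality over $[0,r_0+|Du|]$; it comes from comparing the model function $\psi_1$ (built from $\lambda_{\bar x}$) against $h_1,h_2$ via Lemma~2.5, together with $\lim_{r\to\infty} h_2/h_1\le b_1$, again in the $r\to\infty$ limit.
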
	

When $b_1=0$, Theorem \ref{thm1.1} implies Theorem 1.1 of \cite{Br2}. Let $f=1$ in Theorem \ref{thm1.1}, we obtain a isoperimetric inequality for manifold with lower quadratic Ricci curvature decay.
\begin{cor}
	Let $M,\Omega, r_0,\theta,B,b_1$ be defined as the same way in Theorem \ref{thm1.1}. Then
$$
|\partial \Omega|\geq \Big(n\Big(\frac{|B^n|\theta}{\frac{1+\sqrt{1+4B}}{2}(2e^{r_0b_1}-1)^{n-1}}\Big)^\frac{1}{n}
-
2(n-1)b_1|\Omega|^{\frac{1}{n}}\Big)|\Omega|^{\frac{n-1}{n}}.
$$	
\end{cor}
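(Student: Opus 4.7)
The plan is to deduce the corollary by specializing Theorem \ref{thm1.1} to the constant function $f\equiv 1$, which is manifestly positive and smooth on $\Omega$, and then performing an elementary algebraic rearrangement. No further geometric input---such as comparison theorems or the ABP machinery---is needed at this stage; the argument is purely formal.

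First, I would evaluate each of the four terms appearing in Theorem \ref{thm1.1} at $f\equiv 1$. The boundary integral reduces to $|\partial\Omega|$; the gradient term $\int_\Omega|Df|$ vanishes since $Df=0$; the lower-order term becomes $2(n-1)b_1|\Omega|$; and on the right-hand side, $\int_\Omega f^{n/(n-1)}=|\Omega|$, so the $(n-1)/n$ power is $|\Omega|^{(n-1)/n}$. Substituting these into the inequality of Theorem \ref{thm1.1} produces
\[
|\partial\Omega|+2(n-1)b_1|\Omega|\geq n\Big(\frac{|B^n|\theta}{\tfrac{1+\sqrt{1+4B}}{2}(2e^{r_0b_1}-1)^{n-1}}\Big)^{1/n}|\Omega|^{(n-1)/n}.
\]

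Next, I would move the term $2(n-1)b_1|\Omega|$ to the right-hand side and apply the identity $|\Omega|=|\Omega|^{1/n}\cdot|\Omega|^{(n-1)/n}$, which allows that term to be rewritten as $2(n-1)b_1|\Omega|^{1/n}\cdot|\Omega|^{(n-1)/n}$. Factoring the common factor $|\Omega|^{(n-1)/n}$ out of the resulting two-term expression yields exactly the isoperimetric inequality in the statement. There is no substantive obstacle: the only point to verify is that $f\equiv 1$ is an admissible test function, which is immediate, and that the algebraic rearrangement preserves the direction of the inequality, which is also automatic since $|\Omega|^{(n-1)/n}>0$.
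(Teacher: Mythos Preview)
Your proposal is correct and matches the paper's own approach: the corollary is obtained precisely by taking $f\equiv 1$ in Theorem~\ref{thm1.1} and rearranging. There is nothing to add.
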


Next, we consider Sobolev inequalities for submanifolds. We define a complete noncompact Riemannian manifold $M$ of dimension $n+p$ with \textbf{lower quadratic sectional curvature decay}, if there is a base point $o\in M$ such that
\begin{equation}\label{SG}
\bar{R}(e_B, e_C, e_B ,e_C)(q)\geq -\lambda(d(o,q)),\quad \forall q\in M,1\leq B,C\leq n+p,
\end{equation}
where $\lambda$ is the function defined in $(\ref{G})$, $\bar{R}$ is the curvature tensor of $M$ and $\{e_{A}\}_{1\leq A\leq n+p}$ is a orthonormal basis of $M$ at point $q$.

Similarly, we establish a Sobolev type inequality for a compact submanifold in a Riemannian manifold with lower quadratic sectional curvature decay.

\begin{thm}\label{thm1.3}
Let $M$ be a complete noncompact manifold of dimension $n+p$ with lower quadratic sectional curvature decay respect to a base point $o\in M$. Let $\Sigma$ be a compact submanifold of $M$ of dimension $n$ (possibly with boundary $\partial\Sigma$), and let $f$ be a positive smooth function on $\Sigma$. If $p\geq2$, then
$$
\begin{aligned}
	&\int_{\partial\Sigma}f+\int_\Sigma\sqrt{|D f|^2+f^2|H|^2}
	+(2nb_1+1)\int_\Sigma f\\
	&\geq
	n\Big(\frac{2b_1(n+p)|B^{n+p}|\theta}{(e^{2b_1}-1)p|B^p|(2e^{r_0b_1}-1)^{n+p-1}\frac{1+\sqrt{1+4B}}{2}}\Big)^{\frac{1}{n}}
	\Big(\int_\Sigma f^{\frac{n}{n-1}}\Big) ^{\frac{n-1}{n}}.
\end{aligned}
$$
where $r_0=\max\{{dist}(o,x)|x\in\Sigma\}$, $H$ is the mean curvature vector of $\Sigma$, $D$ is Levi-Civita connection of $\Sigma$, $\theta$ is the asymptotic volume ratio of $M$ and $B,b_1$ are defined in \eqref{G} and \eqref{G1}.
\end{thm}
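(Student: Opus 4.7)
The plan is to extend the ABP transport-map method that underlies Theorem~\ref{thm1.1} to the submanifold setting, following the scheme Brendle introduced for minimal surfaces in Euclidean space and then generalized to manifolds of nonnegative curvature. Compared with Theorem~\ref{thm1.1}, two new ingredients appear: one must integrate over the normal bundle $T^\perp\Sigma$ (which produces the codimension factors $p|B^p|$ and $(n+p)|B^{n+p}|$, as well as the mean-curvature contribution), and one must handle the fact that the sectional curvature bound depends on distance to the base point $o$ rather than on arclength from $x\in\Sigma$.

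First, I would solve the Neumann problem
$$
\mathrm{div}_\Sigma(f\,\nabla u) \;=\; \beta\,f \quad\text{on }\Sigma, \qquad \langle \nabla u,\nu\rangle = 1 \text{ on }\partial\Sigma,
$$
where $\nu$ is the outward unit conormal to $\partial\Sigma$ in $\Sigma$ and the constant $\beta$ is fixed by the divergence theorem; Steps below will show that $\beta$ equals the left-hand side of the desired inequality divided by $\int_\Sigma f$. Next, for a large parameter $r>0$, introduce the transport map
$$
\Phi\colon A\times B^p(0,r)\subset T^\perp\Sigma \longrightarrow M, \qquad \Phi(x,y)=\exp_x\bigl(\nabla u(x)+y\bigr),
$$
where $A\subset\Sigma$ is the contact set on which $u$ satisfies a model one-sided touching condition. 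A covering argument parallel to the one in Theorem~\ref{thm1.1} shows that, outside a null set, $\Phi$ reaches every point of a metric ball of radius $\sim r$ centered at $o$; letting $r\to\infty$ and invoking the asymptotic volume ratio $\theta$ gives
$$
|B^{n+p}|\,\theta\,\int_0^{r}h_1^{n+p-1}(t)\,dt \;\le\; \int_{A\times B^p(0,r)}|\mathrm{Jac}\,\Phi|.
$$

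The heart of the argument is the pointwise Jacobian estimate. Computing $\mathrm{Jac}\,\Phi$ through Jacobi fields along the geodesic $s\mapsto\exp_x(s(\nabla u(x)+y))$ and splitting into a tangential block (involving the Hessian term $D^2u-\langle y,II\rangle$, where $II$ is the second fundamental form of $\Sigma$) and a normal block, comparison with the ODEs \eqref{h} produces an upper bound for $|\mathrm{Jac}\,\Phi|$. Applying the arithmetic-geometric mean inequality, the PDE in Step~1, and the Cauchy-Schwarz estimate
$$
\langle\nabla u,\nabla f\rangle + f\langle y,H\rangle \;\le\; \sqrt{|Df|^2+f^2|H|^2}\,\sqrt{|\nabla u|^2+|y|^2}
$$
then yields a bound of the form $|\mathrm{Jac}\,\Phi|\le(\beta/n)^n\,G(r)$, where $G(r)$ packages the normal-bundle volume contribution. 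The term $(2nb_1+1)\int_\Sigma f$ in Theorem~\ref{thm1.3} is absorbed from the Riccati/curvature corrections along the transport geodesic.

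The main obstacle, and the source of the specific constants in the theorem, is the Jacobi field comparison itself: the sectional curvature lower bound is $-\lambda(d(o,\cdot))$, but the model ODEs \eqref{h} involve $\lambda$ of arclength along the transport geodesic, not of distance from $o$. I plan to resolve this by the triangle inequality $d(o,\exp_x(s(\nabla u+y)))\le r_0+s\sqrt{|\nabla u|^2+|y|^2}$ combined with the monotonicity of $\lambda$ and an a priori bound on $|\nabla u|$ inherited from the Neumann datum and the geometry of $\Sigma$. The Riccati analysis that results, using $(\ref{G})$ and $(\ref{G1})$, produces the factor $(2e^{r_0b_1}-1)^{n+p-1}$ from the tangential Jacobi fields over geodesic length $\sim r_0$, the factor $(e^{2b_1}-1)/(2b_1)$ from integrating the normal-bundle $h_2$-contribution (accounting for the $2b_1$ in the numerator of the final constant), and the coefficient $\tfrac{1+\sqrt{1+4B}}{2}$, arising as the positive root of $x^2-x-B=0$ that governs the worst quadratic-decay model. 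Combining the pointwise Jacobian estimate with the volume-comparison integral and rearranging in $\beta$ then gives the Sobolev inequality.
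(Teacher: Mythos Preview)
Your outline has the right architecture, but several load-bearing steps are set up incorrectly. First, the Neumann problem cannot have right-hand side $\beta f$ with a constant $\beta$: the paper (following Brendle) takes $\mathrm{div}_\Sigma(f\,Du)=nf^{n/(n-1)}-(2nb_1+1)f-\sqrt{|Df|^2+f^2|H|^2}$, and it is exactly the $nf^{n/(n-1)}$ term that, after the AM--GM step on $\Delta_\Sigma u-\langle H,y\rangle$ (Lemma~\ref{hess-sub}), makes the Jacobian bound come out proportional to $f^{n/(n-1)}$ and hence produces the $L^{n/(n-1)}$ norm. A right-hand side $\beta f$ would give an isoperimetric-type bound on $\int_\Sigma f$, not the Sobolev inequality. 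Second, your triangle inequality goes the wrong way: $\lambda$ is nonincreasing, so to bound $\lambda(d(o,\bar\gamma(t)))$ from above you need a \emph{lower} bound on $d(o,\bar\gamma(t))$, namely $d(o,\bar\gamma(t))\ge\bigl|d(o,\bar x)-t\,|Du(\bar x)+\bar y|\bigr|$, not the upper bound $r_0+t|\cdot|$ you wrote. This is precisely what feeds into the shifted comparison ODE $f''=\lambda(|t-r|)f$ and yields the factor $(2e^{r_0b_1}-1)^{n+p-1}$.

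Two further structural points are missing. The transport map must scale tangential and normal parts together, $\Phi_r(x,y)=\exp_x\bigl(r\,Du(x)+r\,y\bigr)$ with $(x,y)$ ranging over $E=\{|Du|^2+|y|^2<1\}$; your map $\exp_x(\nabla u(x)+y)$ with $|y|<r$ separates the scales and does not match the covering Lemma~\ref{tran-sub} or the curved Riccati comparison. In particular, for $p\ge 2$ the ratio $(n+p)|B^{n+p}|/(p|B^p|)$ in the constant comes from restricting to the shell $\sigma r<d(x,q)<r$ (equivalently $\sigma^2<|Du|^2+|y|^2<1$) and sending $\sigma\to1$ at the very end; this annular trick is absent from your plan. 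Finally, the paper's ``direction of motion'' rotation $\bar e_1=\alpha e_1+\beta e_{n+1}$ along $\bar\gamma'(0)$ forces $S_{11}=0$, isolating one \emph{flat} Riccati equation; this is what gives $|\det\bar D\Phi_r|\lesssim r\,\psi_1^{n+p-1}(r)$ rather than $\psi_1^{n+p}(r)$, and without that extra saving the limit against $\int_0^r h_1^{n+p-1}\,dt$ does not converge to the asymptotic volume ratio.
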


\section{Proof of Theorem \ref{thm1.1}}	
In this section, we assume that $M^n$ is a complete noncompact Riemannian manifold whose Ricci curvature satisfies \eqref{Ric} with respect to a base point $o\in M$. Let $\Omega$ be a compact domain in $M$ with smooth boundary $\partial\Omega$ and $f$ be a smooth positive function on $\Omega$.

It suffices to treat the case that $\Omega$ is connected. By using the scaling skill, one can assume that
\begin{equation}
	\label{scale-d}
	\int_{\partial\Omega} f+\int_\Omega|D f|+2(n-1)b_1\int_\Omega f=n\int_\Omega f^\frac{n}{n-1}.
\end{equation}
Since $\Omega$ is connected, we can find a solution of the following Neumann problem
\begin{equation}
	\left\{
	\begin{array}{lr}
		\mathrm{div}(fD u)=nf^\frac{n}{n-1}-2(n-1)b_1f-|D f|,& \text{ in }\Omega,\\
		\\
		\langle Du,\nu\rangle=1,
		& \text{ on }\partial\Omega,
	\end{array}
	\right.\label{neumann}
\end{equation}
where $\nu$ is the outward unit normal vector field of $\partial\Omega$. The standard elliptic regularity theory (see Theorem 6.31 in \cite{GT}) implies $u\in C^{2,\gamma}$, for each $0<\gamma<1$.

Following the notions in \cite{Br2}, we set
$$
U:=\{x\in\Omega\setminus\partial\Omega:0<|Du(x)|<1\}.
$$
For any $r>0$, we define $A_r$ as following:
$$
A_r:=\{\bar{x}\in U| ru(x)+\frac{1}{2}{d}
(x,\exp_{\bar{x}}(rD u(\bar{x})))^2\geq
ru(\bar{x})+\frac{1}{2}r^2|D u(\bar{x})|^2,
\forall x\in\Omega\}.
$$
For each $r>0$, we define the transport map $\Phi_r:\Omega\to M$ as
$$
\Phi_r(x)=\exp_x(rD u(x)), \quad\forall x\in\Omega.
$$
Using the regularity of the solution $u$ of the Neumann problem, we known that the transport map is of class $C^{1,\gamma},0<\gamma<1$.

\begin{lem}\label{hessu}
	If $x\in U$, we have
	\begin{equation*}
		\frac{1}{n}\Delta u\leq f^\frac{1}{n-1}-\Big(\frac{n-1}{n}\Big)2b_1.
	\end{equation*}

	\begin{proof}
		Because $x\in U$, we have $|Du|<1$. Using the Cauchy-Schwarz inequality, we get
		\begin{equation*}
			-\langle Df,Du\rangle\leq|Df|.
		\end{equation*}
	Using \eqref{neumann}, one can obtain that
	\begin{equation*}
		\begin{aligned}
		f\Delta u&=nf^\frac{n}{n-1}-2(n-1)b_1f-|Df|
		-\langle Df,Du\rangle\\
		&\leq nf^\frac{n}{n-1}-2(n-1)b_1f.
		\end{aligned}
	\end{equation*}
	Dividing by $nf>0$, we complete the proof.
	\end{proof}
\end{lem}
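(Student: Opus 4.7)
The plan is to derive the pointwise bound on $\Delta u$ directly from the Neumann equation \eqref{neumann}, using the defining inequality $|Du|<1$ on $U$ as the only nontrivial input. First, I would expand the divergence by the product rule as $\mathrm{div}(f\,Du) = f\Delta u + \langle Df, Du\rangle$ and substitute this identity into \eqref{neumann}, which isolates $f\Delta u$ in the form
\begin{equation*}
f\Delta u \;=\; n f^{\frac{n}{n-1}} - 2(n-1)b_1 f - |Df| - \langle Df, Du\rangle.
\end{equation*}

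Next, since $x\in U$ forces $|Du(x)|<1$, the Cauchy--Schwarz inequality gives $-\langle Df, Du\rangle \le |Df|\,|Du| \le |Df|$, so the two gradient contributions $-|Df|-\langle Df, Du\rangle$ combine to a nonpositive quantity and can be discarded. This yields $f\Delta u \le n f^{\frac{n}{n-1}} - 2(n-1)b_1 f$. Dividing by $nf>0$ (which is legitimate since $f$ is assumed strictly positive on $\Omega$) produces the stated inequality $\tfrac{1}{n}\Delta u \le f^{1/(n-1)} - \tfrac{n-1}{n}\cdot 2b_1$.

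I do not anticipate a real analytic obstacle here; the lemma is essentially a bookkeeping step that repackages the PDE into the pointwise form needed for the later ABP comparison. The one design point worth flagging is that the $-|Df|$ term on the right-hand side of \eqref{neumann} is tailored precisely so that it absorbs $-\langle Df, Du\rangle$ under the constraint $|Du|<1$, and the extra linear term $-2(n-1)b_1 f$ is carried through the same calculation untouched, recovering the $b_1=0$ statement of \cite{Br2} in the limit.
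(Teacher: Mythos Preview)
Your argument is correct and matches the paper's proof essentially verbatim: expand $\mathrm{div}(fDu)=f\Delta u+\langle Df,Du\rangle$, use $|Du|<1$ together with Cauchy--Schwarz to drop $-|Df|-\langle Df,Du\rangle\le 0$, and divide by $nf>0$. The only difference is that you spell out the product-rule step and the intermediate bound $|Df|\,|Du|$ explicitly, which the paper leaves implicit.
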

The proofs of the following three lemmas are identical to those for Lemmas 2.2-2.4 in \cite{Br2} for our Ricci setting. 
\begin{lem}
	The set
	\[
	\{q\in M|d(x,q)<r, \forall x\in\Omega\}\setminus\Omega
	\]
	is contained in $\Phi_r(A_r)$.
\end{lem}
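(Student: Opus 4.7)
Fix a point $q$ in the set $\{q'\in M:d(x,q')<r\ \forall x\in\Omega\}\setminus\Omega$. The strategy is the standard ABP one: consider the auxiliary function
$$
w(x):=ru(x)+\tfrac{1}{2}d(x,q)^2
$$
on the compact set $\Omega$, show that its minimum point $\bar{x}$ lies in the interior, and then read off from the first-order condition both $\bar{x}\in U$ and $\Phi_r(\bar{x})=q$. Membership $\bar{x}\in A_r$ will then be a direct translation of $\bar{x}$ being the global minimizer of $w$ on $\Omega$.

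To exclude the minimizer from $\partial\Omega$, I would compute the outward normal derivative of $w$: the Neumann condition $\langle Du,\nu\rangle=1$ from \eqref{neumann} contributes $r$, while $|D\tfrac{1}{2}d(\cdot,q)^2|=d(\cdot,q)<r$ on $\Omega$ by the standing hypothesis on $q$. Thus $\langle Dw,\nu\rangle\geq r-d(\cdot,q)>0$ along $\partial\Omega$, which is incompatible with minimality there. Consequently $\bar{x}$ is interior and $Dw(\bar{x})=0$. Letting $\gamma$ be the unit-speed minimizing geodesic from $\bar{x}$ to $q$, this first-order condition reads $rDu(\bar{x})=d(\bar{x},q)\gamma'(0)$. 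Since $q\notin\Omega$ we have $d(\bar{x},q)>0$, and since $d(\bar{x},q)<r$ we obtain $0<|Du(\bar{x})|<1$, so $\bar{x}\in U$; moreover
$$
\Phi_r(\bar{x})=\exp_{\bar{x}}(rDu(\bar{x}))=\exp_{\bar{x}}(d(\bar{x},q)\gamma'(0))=\gamma(d(\bar{x},q))=q.
$$

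Finally, using $\tfrac{1}{2}r^2|Du(\bar{x})|^2=\tfrac{1}{2}d(\bar{x},q)^2$, the minimizing property $w(x)\geq w(\bar{x})$ for every $x\in\Omega$ becomes exactly the inequality
$$
ru(x)+\tfrac{1}{2}d(x,\exp_{\bar{x}}(rDu(\bar{x})))^2\geq ru(\bar{x})+\tfrac{1}{2}r^2|Du(\bar{x})|^2,
$$
i.e.\ the defining condition of $A_r$. Hence $q=\Phi_r(\bar{x})\in\Phi_r(A_r)$. The main technical point is the regularity of the distance function: $\tfrac{1}{2}d(\cdot,q)^2$ is only Lipschitz in general and may fail to be differentiable on the cut locus of $q$. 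However, it is locally semiconcave, and at an interior minimizer of the sum of the smooth function $ru$ and this semiconcave function the first-order calculation can be carried out rigorously by choosing the upper support function associated with the minimizing geodesic $\gamma$ from $\bar{x}$ to $q$, exactly as in the corresponding step of Brendle's argument in \cite{Br2}.
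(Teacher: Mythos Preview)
Your argument is correct and is precisely the standard ABP minimization argument from Brendle \cite{Br2}, to which the paper explicitly defers for this lemma (it states that the proof is identical to that of Lemma~2.2 in \cite{Br2}). Your treatment of the cut-locus/semiconcavity issue via an upper barrier along a chosen minimizing geodesic is also the way this point is handled there, so there is no discrepancy in approach.
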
	
\begin{lem}\label{d-mat}
	Assume that $\bar{x}\in A_r$, and let 
	$\bar{\gamma}(t):=\exp_{\bar{x}}(tDu(\bar{x}))$ for all $t\in[0,r]$. 
	If $Z$ is a smooth vector field along $\bar{\gamma}$ satisfying
	$Z(r)=0$, then
	\[
	(D^2u)(Z(0),Z(0))+\int_0^r\big(|D_tZ(t)|^2-R(\bar{\gamma}'(t),Z(t),\bar{\gamma}'(t),Z(t))\big)dt\geq0.
	\]	
\end{lem}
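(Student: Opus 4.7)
The plan is to use the second-derivative test at a minimum, with one twist. The definition of $A_r$ directly says that for any curve $x(\cdot)$ through $\bar x$ the function $\phi(s):=ru(x(s))+\tfrac12 d(x(s),\bar\gamma(r))^2$ has a local minimum at $s=0$, and if $\phi$ were $C^2$ the lemma would reduce to the standard second-variation-of-distance calculation. Unfortunately $\tfrac12 d^2$ is not known a priori to be smooth at $\bar x$, so the plan is to bound it pointwise from above by an explicit smooth energy and work with that majorant, which inherits the minimization property.

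More concretely, I would take $x(s):=\exp_{\bar x}(sZ(0))$, extend $Z$ to the variation $\tilde\gamma_s(t):=\exp_{\bar\gamma(t)}(sZ(t))$ of $\bar\gamma$, and set $E(s):=\tfrac12\int_0^r|\partial_t\tilde\gamma_s|^2\,dt$. Because $Z(r)=0$, the endpoints satisfy $\tilde\gamma_s(0)=x(s)$ and $\tilde\gamma_s(r)=\bar\gamma(r)$, and Cauchy--Schwarz applied to $\tilde\gamma_s$ gives $\tfrac{1}{2r}d(x(s),\bar\gamma(r))^2\le E(s)$. Testing the $A_r$ inequality at $x=\bar x$ itself forces $d(\bar x,\bar\gamma(r))\ge r|Du(\bar x)|$, so $\bar\gamma|_{[0,r]}$ is minimizing and the Cauchy--Schwarz estimate is tight at $s=0$. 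The $C^2$ function $\psi(s):=ru(x(s))+rE(s)$ therefore satisfies $\psi(s)\ge\phi(s)\ge\phi(0)=\psi(0)$ for $s$ near $0$, whence $\psi''(0)\ge 0$.

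To finish, I would compute $\psi''(0)$ directly. Since $x(s)$ is a geodesic in $M$, the chain rule gives $\tfrac{d^2}{ds^2}\big|_{s=0}ru(x(s))=r(D^2u)(Z(0),Z(0))$. The second-variation-of-energy formula, applied to the variation $\tilde\gamma_s$ of the geodesic $\bar\gamma$, yields
\[
E''(0)=\int_0^r\bigl(|D_tZ|^2-R(\bar\gamma',Z,\bar\gamma',Z)\bigr)\,dt+\bigl\langle\nabla_s\partial_s\tilde\gamma_s|_{s=0},\bar\gamma'\bigr\rangle\Big|_{t=0}^{t=r},
\]
and the boundary contribution vanishes identically: for each fixed $t$ the curve $s\mapsto\exp_{\bar\gamma(t)}(sZ(t))$ is a geodesic in $M$, so $\nabla_s\partial_s\tilde\gamma_s|_{s=0}\equiv 0$. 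Dividing $\psi''(0)\ge 0$ by $r>0$ then delivers the claimed inequality. The main substantive step in this plan is trading $\tfrac12 d^2$ for $rE$, which sidesteps any cut-locus or conjugate-point analysis; once that replacement is in place, the rest is the standard Jacobi-field bookkeeping.
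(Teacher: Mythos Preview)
Your argument is correct and is essentially the proof of Lemma~2.3 in \cite{Br2}, to which the paper defers verbatim: one replaces $\tfrac12 d(\cdot,\bar\gamma(r))^2$ by the energy of the variation $\tilde\gamma_s(t)=\exp_{\bar\gamma(t)}(sZ(t))$, observes that the resulting smooth majorant has a minimum at $s=0$, and reads off the inequality from the second-variation-of-energy formula (the boundary term vanishes because each $s\mapsto\tilde\gamma_s(t)$ is a geodesic). The only point worth recording explicitly is that $\bar x\in A_r\subset U\subset\Omega\setminus\partial\Omega$, so $x(s)\in\Omega$ for $|s|$ small and the $A_r$ inequality may indeed be applied at $x=x(s)$.
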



\begin{lem}\label{jacovani}
Assume that $\bar{x}\in A_r$, and let 
$\bar{\gamma}(t):=\exp_{\bar{x}}(tD u(\bar{x}))$ for all $t\in[0,r]$. Moreover, 
let $\{e_1,\dots,e_n\}$ be an orthonormal basis of $T_{\bar{x}}M$. Suppose that $W$ 
is a Jacobi field along $\bar{\gamma}$ satisfying 	
\[
\langle D_tW(0),e_j\rangle=(D^2u)(W(0),e_j),\quad 1\leq j\leq n.
\]
If $W(\tau)=0$ for some $\tau\in(0,r)$, then $W$ vanishes identically.
\end{lem}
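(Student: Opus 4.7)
The plan is to argue by contradiction, treating the inequality in Lemma \ref{d-mat} as an index-form statement. Suppose $W\not\equiv 0$; then by uniqueness for the Jacobi equation one must have $D_tW(\tau)\neq 0$. I will construct a vector field along $\bar\gamma$, vanishing at $t=r$, for which the quadratic form
$$
Q(Z) := (D^2 u)(Z(0),Z(0)) + \int_0^r \Bigl(|D_tZ|^2 - R(\bar\gamma',Z,\bar\gamma',Z)\Bigr)\,dt
$$
is strictly negative, contradicting Lemma \ref{d-mat}.

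First I would consider the piecewise-smooth field $Z_0$ defined by $Z_0=W$ on $[0,\tau]$ and $Z_0\equiv 0$ on $[\tau,r]$. Pairing the initial condition $\langle D_tW(0),e_j\rangle = (D^2u)(W(0),e_j)$ with $\langle W(0),e_j\rangle$ and summing in $j$ gives $\langle D_tW(0),W(0)\rangle = (D^2u)(W(0),W(0))$. Integration by parts on $[0,\tau]$, together with the Jacobi equation $D_t^2W = -R(\bar\gamma',W)\bar\gamma'$ and $W(\tau)=0$, yields
$$
\int_0^\tau |D_tW|^2\,dt = -\langle D_tW(0),W(0)\rangle + \int_0^\tau R(\bar\gamma',W,\bar\gamma',W)\,dt,
$$
from which a direct substitution shows $Q(Z_0)=0$.

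Next, let $V$ be a smooth vector field along $\bar\gamma$ with $V(r)=0$ and $V(\tau)=-D_tW(\tau)$. Polarizing the previous computation, or equivalently integrating by parts the cross term, gives the bilinear pairing $B(Z_0,V) = \langle D_tW(\tau),V(\tau)\rangle = -|D_tW(\tau)|^2 < 0$, so for small $s>0$ the piecewise-smooth competitor $Z_0+sV$ satisfies $Q(Z_0+sV)<0$. To obtain an honest smooth field I would mollify the corner at $\tau$ by replacing $Z_0$ with $\phi_\epsilon W$, where $\phi_\epsilon\in C^\infty$ equals $1$ on $[0,\tau]$ and is supported in $[0,\tau+\epsilon]$. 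Because $W(t) = (t-\tau)D_tW(\tau) + O((t-\tau)^2)$ near $\tau$, the rescaling $t-\tau=\epsilon \sigma$ shows that the extra energy $\int_\tau^{\tau+\epsilon}|D_t(\phi_\epsilon W)|^2\,dt$ is $O(\epsilon)$, while the mixed pairing with $V$ converges to $B(Z_0,V)$ as $\epsilon\to 0$. Fixing $s$ small and then taking $\epsilon\ll s$ produces a smooth field violating Lemma \ref{d-mat}, the desired contradiction.

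The main obstacle is this mollification step: one must verify that the $O(\epsilon)$ energy introduced in the corner is genuinely dominated by the $-2s|D_tW(\tau)|^2$ contribution from the perturbation $sV$, so the order $s\to 0$ followed by $\epsilon\ll s$ is chosen correctly. Once this is in place, the remaining estimates reduce to the Taylor expansion of $W$ near $\tau$ and routine integration by parts, and no further input about the geometry of $M$ is needed beyond Lemma \ref{d-mat} itself.
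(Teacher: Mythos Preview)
Your argument is correct and is essentially the classical index-form proof that the paper defers to Brendle \cite{Br2} for; the paper gives no independent proof of this lemma. One small simplification: the mollification step you identify as the main obstacle can be avoided altogether, since the proof of Lemma~\ref{d-mat} (second variation of energy along $\bar\gamma$) goes through verbatim for piecewise smooth $Z$, so you may apply it directly to the broken field $Z_0+sV$ without smoothing the corner.
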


Now, we derive some comparison type results for later use.

\begin{lem}
	Let $r\in[0,r_0]$, where $r_0=\max\{{d}(o,x)|x\in\Omega\}$. Define $f(t)$ be a solution of the following problem
	\begin{equation}\label{f}
		\left\{
		\begin{aligned}
			&f''(t)=\lambda(|t-r|)f(t),\quad t\in(0,\infty),\\
			&f(0)=0, f'(0)=1.
		\end{aligned}
		\right.
	\end{equation}	
	Then we have
	\begin{equation}\label{fineq}
		f(t)\leq \frac{e^{r_0b_1}-1}{b_1}h_2(t)+e^{r_0b_1}h_1(t),\quad t\in[0,\infty).
	\end{equation}	
\end{lem}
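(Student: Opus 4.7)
My plan is to split $[0,\infty)$ at $t=r$ and handle the two pieces separately. On $[r,\infty)$ the equation becomes $f''=\lambda(t-r)f$, and shifting time by $r$ shows that $f$ has the explicit form
\[
f(t)=f(r)\,h_2(t-r)+f'(r)\,h_1(t-r),\qquad t\geq r,
\]
since the shifted equation $\tilde f''(\tau)=\lambda(\tau)\tilde f(\tau)$ has $h_2,h_1$ as its fundamental pair. The strategy is therefore to obtain a Gronwall-type estimate on $[0,r]$ that controls both $f(r)$ and $f'(r)$ by the constants $C_1:=\tfrac{e^{r_0 b_1}-1}{b_1}$ and $C_2:=e^{r_0 b_1}$, and then to finish using monotonicity of $h_1$ and $h_2$.

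For $t\in[0,r]$ the equation is $f''=\lambda(r-t)f$. Since $\lambda\geq 0$, $f$ is convex with $f(0)=0$ and $f'(0)=1$, hence positive and nondecreasing. Plugging $f(s)\leq f(t)$ into $f'(t)=1+\int_0^t\lambda(r-s)f(s)\,ds$ together with the substitution bound $\int_0^t\lambda(r-s)\,ds=\int_{r-t}^{r}\lambda\leq b_1$ gives the differential inequality $f'(t)\leq 1+b_1 f(t)$. Solving it via the integrating factor $e^{-b_1 t}$ yields $f(t)\leq(e^{b_1 t}-1)/b_1\leq C_1$ on $[0,r]$. Since $h_2$ is convex with $h_2(0)=1,h_2'(0)=0$ we have $h_2\geq 1$, and so on this interval $f(t)\leq C_1\leq C_1 h_2(t)+C_2 h_1(t)$, as desired.

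For $t\geq r$, applying the Gronwall bound at $t=r$ gives $f(r)\leq C_1$ and $f'(r)\leq 1+b_1 f(r)\leq e^{b_1 r}\leq C_2$. Both $h_1$ and $h_2$ are nondecreasing (their second derivatives equal $\lambda h_i\geq 0$, and $h_1'(0)=1,h_2'(0)=0$), so the explicit representation above gives
\[
f(t)\leq C_1 h_2(t-r)+C_2 h_1(t-r)\leq C_1 h_2(t)+C_2 h_1(t),
\]
completing the argument.

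The only real obstacle is the estimate on $[0,r]$. A direct Sturm/Wronskian comparison between $f''=\lambda(r-t)f$ and $y''=\lambda(t)y$ fails, because the sign of $\lambda(r-t)-\lambda(t)$ flips at $t=r/2$, so one cannot conclude $f\leq h_1$ or similar on the whole interval. The Gronwall approach sidesteps this by replacing $\lambda$ pointwise by its total integral $b_1$; the estimate is lossy but produces exactly the constants $C_1$ and $C_2$ appearing in the statement.
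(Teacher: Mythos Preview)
Your proof is correct and follows essentially the same route as the paper: split at $t=r$, use the linear representation $f(t)=f(r)h_2(t-r)+f'(r)h_1(t-r)$ for $t\ge r$ together with monotonicity of $h_1,h_2$, and control $f(r),f'(r)$ by a Gronwall-type estimate on $[0,r]$. The only cosmetic difference is that the paper obtains the bound $f(r)\le (e^{r b_1}-1)/b_1$ via a Sturm comparison with the explicit supersolution $\int_0^t e^{\int_0^s \tau\lambda(|\tau-r|)\,d\tau}\,ds$ (invoking Lemma~2.1 of \cite{PRS}), whereas you derive it directly from the differential inequality $f'\le 1+b_1 f$; your version also checks the inequality on $[0,r]$, which the paper omits.
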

\begin{proof}
	It is obviously that $h_1,h_2,h'_1,h'_2,f,f'$ are nonnegative and nondecreasing function on $[0,\infty)$. It is easy to show that
	$$
	(\int_0^te^{\int_0^s\tau\lambda(|\tau-r|)d\tau}ds)''\geq\lambda(|t-r|)\int_0^te^{\int_0^s\tau\lambda(|\tau-r|)d\tau}ds.
	$$
	By Lemma 2.1 in \cite{PRS}, we have
	$$
	f(r)\leq\int_0^re^{\int_0^s\tau\lambda(r-\tau)d\tau}ds\leq\int_0^re^{sb_1}ds\leq
	\frac{e^{r_0b_1}-1}{b_1}.
	$$
	For it's derivative, we have
	$$
	f'(r)= 1+\int_0^r \lambda f(t)dt\leq e^{r_0b_1}
	$$
	When $t\geq r$, we have the following ODE
	$$
	f''(t)=\lambda(t-r)f(t).
	$$
	with initial value $f(r)$ and initial derivative $f'(r)$. Through the linearlity, we have
	$$
	\begin{aligned}
		f(t)&=f(r)h_2(t-r)+f'(r)h_1(t-r)\\
		&\leq f(r)h_2(t)+f'(r)h_1(t)\\
		&\leq \frac{e^{r_0b_1}-1}{b_1}h_2(t)+e^{r_0b_1}h_1(t),
	\end{aligned}
	$$
	for $t\geq r$.
\end{proof}

\begin{lem}
	By the assumption of \eqref{G}, we have
	\begin{equation}\label{limsup}
		\varlimsup_{r\to\infty}\frac{rh^{(n-1)}_1(r)}{\int_0^r h^{(n-1)}_1\ dt}\leq
		\varlimsup_{r\to\infty}n\frac{rh'_1(r)}{h_1(r)}\leq n\frac{1+\sqrt{1+4B}}{2}.
	\end{equation}
\end{lem}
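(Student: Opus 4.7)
First I will apply a L'H\^opital-type (Stolz--Ces\`aro) inequality for limsup to handle the left-hand inequality. Since $h_1''=\lambda h_1\ge 0$ and $h_1'(0)=1$, one has $h_1'\ge 1$, hence $h_1(r)=\int_0^r h_1'(t)\,dt\le rh_1'(r)$, so $rh_1'(r)/h_1(r)\ge 1$ for every $r>0$. In particular the denominator $\int_0^r h_1^{n-1}(t)\,dt$ tends to $\infty$, and the L'H\^opital--limsup inequality gives
\[
\varlimsup_{r\to\infty}\frac{rh_1^{n-1}(r)}{\int_0^r h_1^{n-1}(t)\,dt}\le\varlimsup_{r\to\infty}\frac{\bigl(rh_1^{n-1}(r)\bigr)'}{h_1^{n-1}(r)}=\varlimsup_{r\to\infty}\Bigl(1+(n-1)\frac{rh_1'(r)}{h_1(r)}\Bigr).
\]
Since $rh_1'/h_1\ge 1$, the elementary bound $1+(n-1)\tfrac{rh_1'}{h_1}\le n\tfrac{rh_1'}{h_1}$ yields the left-hand inequality.

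For the right-hand inequality I will set $v(r):=rh_1'(r)/h_1(r)$. A direct differentiation using $h_1''=\lambda h_1$ produces the Riccati-type identity
\[
rv'(r)=v(r)-v(r)^2+r^2\lambda(r).
\]
Let $L:=\tfrac{1+\sqrt{1+4B}}{2}$ be the positive root of $x^2-x-B=0$. The polynomial $\varphi(x):=x-x^2+B$ vanishes at $L$ with derivative $1-2L<0$, so for any $\delta>0$ one can choose $\epsilon>0$ and $c>0$ such that $\varphi(L+\delta)+\epsilon\le-c$. Using $\varlimsup_{r\to\infty}r^2\lambda(r)=B$, pick $R_\epsilon$ with $r^2\lambda(r)\le B+\epsilon$ for $r\ge R_\epsilon$. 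Whenever $r\ge R_\epsilon$ and $v(r)\ge L+\delta$, the Riccati identity forces $rv'(r)\le-c<0$, so $v$ cannot cross $L+\delta$ upward on $[R_\epsilon,\infty)$. Combined with the observation that $v$ cannot stay $\ge L+\delta$ on a whole tail (otherwise $v(r)\le v(R_\epsilon)-c\log(r/R_\epsilon)\to-\infty$, contradicting $v>0$), a continuity argument then shows $v(r)<L+\delta$ for all sufficiently large $r$. Sending $\delta\to 0^+$ closes the second inequality.

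The hard part will be the Riccati analysis in the second step. The crucial ingredients are the identification of $L$ as the threshold via the fixed-point equation $L^2-L=B$ (exactly where the right-hand side of the ODE vanishes when $r^2\lambda=B$), the sign condition $L>1/2$ which makes $\varphi$ strictly decreasing near $L$, and the logarithmic divergence estimate that rules out $v$ remaining above $L+\delta$ on a whole tail.
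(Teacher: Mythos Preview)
Your argument is correct, but it proceeds differently from the paper on both halves.

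For the first inequality, the paper avoids L'H\^opital entirely: from $h_1^n(r)=n\int_0^r h_1^{n-1}h_1'\,dt\le n\,h_1'(r)\int_0^r h_1^{n-1}\,dt$ (using that $h_1'$ is nondecreasing) one gets the \emph{pointwise} bound $\dfrac{h_1^{n-1}(r)}{\int_0^r h_1^{n-1}\,dt}\le n\,\dfrac{h_1'(r)}{h_1(r)}$ for every $r>0$, and multiplying by $r$ gives the result directly. Your L'H\^opital--limsup route is valid (you correctly check $\int_0^r h_1^{n-1}\to\infty$ and use $rh_1'/h_1\ge 1$ to absorb the extra $1$), but it is slightly less sharp in that it only yields the inequality asymptotically rather than pointwise.

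For the second inequality the difference is more substantial. The paper argues by Sturm-type comparison: for each $\varepsilon>0$ it introduces the auxiliary solutions $g_1,g_2$ of $g''=\tfrac{B(1+\varepsilon)}{1+r^2}g$, matches initial data at a large radius $R$, and then invokes external comparison results (Proposition~2.11 in \cite{PRS} and Lemmas~2.5--2.6 in \cite{DLL1}) to transfer the known bound $\varlimsup rg_1'/g_1\le\tfrac{1+\sqrt{1+4B(1+\varepsilon)}}{2}$ back to $h_1$. Your approach is a direct and self-contained Riccati analysis of $v=rh_1'/h_1$ via $rv'=v-v^2+r^2\lambda$: you identify $L$ as the relevant equilibrium, use strict decrease of $\varphi(x)=x-x^2+B$ past $x=1/2$ to force $v'<0$ whenever $v\ge L+\delta$ for large $r$, rule out $v$ staying above $L+\delta$ by the $\log$-divergence, and invoke the no-upward-crossing observation to trap $v$ below $L+\delta$ eventually. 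This avoids all the cited machinery and is arguably more elementary; the paper's comparison approach, on the other hand, plugs into a framework that is reused elsewhere (e.g.\ the later estimates on $\psi_1,\psi_2$).
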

\begin{proof}
	For $r>0$, we have
	$$
	\begin{aligned}
		h_1^n(r)&=n\int_0^r h^{(n-1)}_1h_1'dt\\
		&\leq nh_1'(r)\int_0^r h^{(n-1)}_1dt,
	\end{aligned}
	$$
	which implies
	$$
	\frac{h^{(n-1)}_1(r)}{\int_0^r h^{(n-1)}_1\ dt}\leq n\frac{h'_1(r)}{h_1(r)}.
	$$
	When the inequality above is equality, we have $h_1'(r)\equiv1$ and
	\begin{equation}\label{brendle2}
		\lambda(t)\equiv0,
	\end{equation}
	which is the case of \cite{Br2}.
	When $B=0$, by Lemma 2.1 in \cite{PRS}, one can get
	$$
	\frac{rh'}{h}\geq 1,\quad r>0.
	$$
	So we have
	$$
	\lim_{r\to\infty}\frac{rh'_1}{h_1}=1+\lim_{r\to\infty}r^2\lambda\frac{h}{rh'}=1=\frac{1+\sqrt{1+4B}}{2}.
	$$
	When $B\neq 0$, by the assumption of \eqref{G}, for any small number $\varepsilon>0$, there exists a large number $R$ such that
	$$
	\lambda(r)\leq \frac{B(1+\varepsilon)}{1+r^2},
	$$
	where $r\geq R>0$. Next, we set $g_1,g_2$ be solutions of the following problems
	$$
	\left\{
	\begin{array}{l}
		g_1''= \frac{B(1+\varepsilon)}{1+r^2}g_1 ,\quad t\in[0,\infty),\\
		\\
		g_1(0)=0,g_1'(0)=1,
	\end{array} \right.
	\quad
	\left\{
	\begin{array}{l}
		g_2''= \frac{B(1+\varepsilon)}{1+r^2}g_2 ,\quad t\in[0,\infty),\\
		\\
		g_2(0)=1,g_2'(0)=0.
	\end{array} \right.
	$$
	It is easy to verify that $g'_1g_2-g_2'g_1\equiv1$. Then we can find a unique solution $(x,y)$ such that
	$$
	\begin{bmatrix}
		h_1(R)\\
		h_1'(R)
	\end{bmatrix}
	=\begin{bmatrix}
		g_1(R) & g_2(R)\\
		g_1'(R) & g_2'(R)
	\end{bmatrix}
	\begin{bmatrix}
		x\\
		y
	\end{bmatrix},
	$$
	which means the initial values of $h(t)$ and $h'(t)$ are the same as $xg_1(t)+yg_2(t)$ and $xg_1'(t)+yg_2'(t)$ respectively. Similar to Lemma 2.5 in \cite{DLL1}, one can obtain
	\begin{equation}\label{ppd}
		\frac{h'_1}{h_1}(r)\leq\frac{xg_1'+yg_2'}{xg_1+yg_2}(r)=
		\Big(\frac{x+y\frac{g_2'}{g_1'}}{x+y\frac{g_2}{g_1}}(r)\Big)
		\frac{g_1'}{g_1}(r),
	\end{equation}
	for $r>R$. By Lemma 2.6 in \cite{DLL1}, we have
	$$
	\lim_{r\to\infty}\frac{g_2}{g_1}=\lim_{r\to\infty}\frac{g_2'}{g_1'}.
	$$
	By Proposition 2.11 in \cite{PRS} and \eqref{ppd}, we have
	$$
	\varlimsup_{r\to\infty}\frac{rh^{(n-1)}_1}{\int_0^r h^{(n-1)}_1\ dt}\leq
	\varlimsup_{r\to\infty}n\frac{rh'_1(r)}{h_1(r)}\leq
	\varlimsup_{r\to\infty}\frac{nrg_1'}{g_1}\leq
	n\frac{1+\sqrt{1+4B
			(1+\varepsilon)}}{2}.
	$$
	Let $\varepsilon\to 0$ and we finish the proof.
\end{proof}

\begin{lem}\label{com-2}
	Let $h$ be the solution of the following problem
	$$
	\left\{
	\begin{aligned}
		&h''=Gh,\quad t\in(0,+\infty),\\
		&h(0)=0,h'(0)=1,
	\end{aligned}
	\right.
	$$
	where $G$ is a nonnegative function satisfing $\varlimsup_{t\to\infty}t^2G(t)<\infty$. Then we have
	$$
	\lim_{t\to\infty}\frac{h(t-c)}{h(t)}=1,
	$$
	for $c\in\mathbb{R}$.
\end{lem}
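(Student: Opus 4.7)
The plan is to reduce this to a logarithmic derivative estimate of the form $h'(r)/h(r) = O(1/r)$ and then integrate it over the short interval with endpoints $t$ and $t-c$.

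First, since $G \geq 0$ and $h'(0) = 1$, the ODE $h'' = Gh$ forces $h \geq 0$ and $h' \geq 1$ on $[0,\infty)$, so $h$ is increasing and tends to $+\infty$. Second, the proof of the preceding lemma (which produces \eqref{limsup}) uses only the ODE, the initial conditions and the hypothesis $\varlimsup_{t \to \infty} t^2 \lambda(t) < \infty$; it does not exploit any further property of $\lambda$. Repeating that argument verbatim with $G$ in place of $\lambda$ and with $B' := \varlimsup_{t \to \infty} t^2 G(t)$ in place of $B$, one obtains
$$
\varlimsup_{r \to \infty} \frac{r h'(r)}{h(r)} \leq \frac{1 + \sqrt{1 + 4 B'}}{2} =: \alpha.
$$
Consequently, for every $\varepsilon > 0$ there exists $R > 0$ such that $h'(r)/h(r) \leq (\alpha + \varepsilon)/r$ whenever $r > R$.

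Third, fix $c \in \mathbb{R}$ and take $t$ large enough that $\min(t, t-c) > R$. Integrating the logarithmic derivative bound between $t - c$ and $t$ gives
$$
\Bigl| \log \frac{h(t)}{h(t-c)} \Bigr| = \Bigl| \int_{t-c}^t \frac{h'(s)}{h(s)} \, ds \Bigr| \leq (\alpha + \varepsilon) \Bigl| \log \frac{t}{t-c} \Bigr|,
$$
and the right-hand side tends to $0$ as $t \to \infty$. Exponentiating yields $h(t-c)/h(t) \to 1$, as required.

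There is essentially no obstacle beyond recognising that the logarithmic derivative estimate \eqref{limsup} is insensitive to the specific coefficient function and transfers directly to the general $G$; the rest is a one-line integration of $h'/h$ over a vanishingly short (in the logarithmic scale) interval.
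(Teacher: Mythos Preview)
Your proof is correct and follows the same route as the paper: both arguments hinge on the logarithmic derivative bound $h'(t)/h(t)\le C/t$ (the paper obtains it by citing Proposition~2.11 in \cite{PRS} directly, you by re-running the preceding lemma with $G$ in place of $\lambda$), and then use it to show $h(t-c)/h(t)\to 1$. The only cosmetic difference is in the final step: the paper exploits that $h'$ is increasing to write $0\le h(t)-h(t-c)\le c\,h'(t)$ and divides by $h(t)$, whereas you integrate $h'/h$ over $[t-c,t]$ and bound $|\log(h(t)/h(t-c))|$; your version has the mild advantage of handling both signs of $c$ in one stroke.
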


\begin{proof}
	According to Proposition 2.11 in \cite{PRS}, we have
	$$
	\frac{h'(t)}{h(t)}\le\frac{C_1}{t},
	$$
	for some constant $C_1$.
	Since $h'$ is increasing, one has
	$$
	\frac{h(t)-h(t-c)}{h(t)}\le c\frac{h'(t)}{h(t)}\le\frac{cC_1}{t},\quad t>c,
	$$
	which implies
	$$
	\lim_{t\to\infty}\frac{h(t-c)}{h(t)}=1.
	$$
\end{proof}

\begin{proof}[\textbf{Proof of Theorem \ref{thm1.1}}]
Let $\bar{x}\in A_r$ and $\{e_1,\dots,e_n\}$ be an orthonormal basis of the tangent space $T_{\bar{x}}M$. Constructing the geodesic normal coordinates $(x^1,\dots,x^n)$ around $\bar{x}$, such that $\frac{\partial}{\partial x^i}=e_i$ at $\bar{x}$. Let $\bar{\gamma}(t):=\exp_{\bar{x}}(tD u(\bar{x}))$ for all $t\in[0,r]$. For $1\leq i\leq n$, we define $E_i(t)$ be the parallel transport of $e_i$ along $\bar{\gamma}$. For $1\leq i\leq n$, let $X_i(t)$ be the Jacobi field along $\bar{\gamma}$ with the initial conditions of $X_i(0)=e_i$ and 
$$
\langle D_tX_i(0),e_j\rangle=(D^2u)(e_i,e_j),\quad1\leq j\leq n.
$$
Let $P_{n\times n}(t)$ be a matrix defined by
$$
P_{ij}(t)=\langle X_i(t),E_j(t)\rangle, \quad
1\leq i,j\leq n.
$$
From Lemma \ref{jacovani}, we know $\det P(t)>0,\forall t\in[0,r)$. Obviously, we have $|\det D\Phi_t(\bar{x})|=\det P(t)$. Let $S_{n\times n}(t)$ be a matrix defined by
$$
S_{ij}(t)=R(\bar{\gamma}'(t),E_i(t),\bar{\gamma}'(t),E_j(t)),
\quad
1\leq i,j\leq n,
$$
where $R$ denotes the Riemannian curvature tensor of $M$. By the Jacobi equation, we obtain
$$
\left\{
\begin{array}{l}
	P''(t)=-P(t)S(t),\quad t\in[0,r],\\
	\\
	P_{ij}(0)=\delta_{ij},P_{ij}'(0)=(D^2u)(e_i,e_j).
\end{array} \right.
$$
Let $Q(t)=P(t)^{-1}P'(t),t\in[0,r)$, a simple computation yields
$$
\frac{d}{dt}Q(t)=-S(t)-Q^2(t),
$$
where $Q(t)$ is symmetric. Since $\bar x\in U$, we can set $e_1=\frac{Du(\bar{x})}{|Du(\bar{x})|}$.
In this basis, the first row and column of $S_{n\times n}$ vanishing. Let $\mathrm{tr}_\perp Q=\sum_{i=2}^nQ_{ii}$, we obtain
$$
\left\{
\begin{aligned}
	&Q'_{11}+Q_{11}^2\leq0,\quad t\in[0,r),\\
	&Q_{11}(0)=D^2u(\bar{x})(e_1,e_1),
\end{aligned}
\right.
\quad
\left\{
\begin{aligned}
	&(\mathrm{tr}_\perp Q)'+\frac{1}{n-1}(\mathrm{tr}_\perp Q)^2\leq-\mathrm{tr}S(t)\\
	&\leq (n-1)|D u(\bar{x})|^2\lambda(d(o,\bar{\gamma}(t))),
	\quad t\in[0,r),\\
	&\mathrm{tr}_\perp Q(0)=\Delta u(\bar{x})-D^2u(\bar{x})(e_1,e_1).
\end{aligned}
\right.
$$
By triangle inequality, we get
\begin{equation}\label{triangle}
	d(o,\bar{\gamma}(t))\geq|d(o,\bar{x})-d(\bar{x},\bar{\gamma}(t))|
	=|d(o,\bar{x})-t|D u(\bar{x})||.
\end{equation}
By \eqref{triangle}, we have
$$
(\frac{1}{n-1}\mathrm{tr}_\perp Q)'+(\frac{1}{n-1}\mathrm{tr}_\perp Q)^2
\leq|D u(\bar{x})|^2\lambda(d(o,\bar{\gamma}(t)))
\\
\leq|Du(\bar{x})|^2
\lambda(|d(o,\bar{x})-t|Du(\bar{x})||).
$$
Set $g=\frac{1}{n-1}\mathrm{tr}_\perp Q$, then
$$
\qquad g'(t)+g(t)^2\leq
|D u(\bar{x})|^2
\lambda(|d(o,\bar{x})-t|D u(\bar{x})||)\triangleq\lambda_{\bar{x}}(t).
$$
So we have
$$
\left\{
\begin{array}{l}
	g'(t)+g(t)^2\leq \lambda_{\bar{x}}(t), \quad t\in[0,r),\\
	\\
	g(0)=\frac{1}{n-1}(\Delta u(\bar{x})-Q_{11}(0)).
\end{array} \right.
$$
If we take $\phi_{11}=e^{\int_0^tQ_{11}d\tau}>0$ and $\phi=e^{\int_0^tg(\tau)d\tau}>0$, then 
$$
\left\{
\begin{array}{l}
	\phi_{11}''\leq 0, \quad t\in[0,r),\\
	\\
	\phi_{11}(0)=1,\phi_{11}'(0)=Q_{11}(0),
\end{array} \right.
\quad
\left\{
\begin{array}{l}
	\phi''\leq \lambda_{\bar{x}}(t)\phi, \quad t\in[0,r),\\
	\\
	\phi(0)=1,\phi'(0)=g(0).
\end{array} \right.
$$
Next, $\psi_1,\psi_2$ denote the solutions of the following problems
$$
\left\{
\begin{array}{l}
	\psi_1''= \lambda_{\bar{x}}(t)\psi_1 ,\quad t\in[0,\infty),\\
	\\
	\psi_1(0)=0,\psi_1'(0)=1,
\end{array} \right.
\quad
\left\{
\begin{array}{l}
	\psi_2''= \lambda_{\bar{x}}(t)\psi_2 ,\quad t\in[0,\infty),\\
	\\
	\psi_2(0)=1,\psi_2'(0)=0.
\end{array} \right.
$$
Note that $|Du(\bar{x})|<1$. By Lemma 2.6 in \cite{DLL1}, one can derive
$$
\begin{aligned}
\lim_{r\to\infty}\frac{\psi_2(r)}{\psi_1(r)}=
\lim_{r\to\infty}\frac{\psi'_2(r)}{\psi'_1(r)}&\leq\int_0^\infty\lambda_{\bar{x}}(t)dt
\\
&=\int_0^\infty|Du(\bar{x})|^2\lambda(|d(o,\bar{x})-t|D u(\bar{x})||)dt
\\
&\leq\int_0^\infty\lambda(|d(o,\bar{x})-\tau|)d\tau
\\
&\leq2b_1.
\end{aligned}
$$
Let $\psi(t)=\psi_2(t)+g(0)\psi_1(t)$, using the comparison type result of Lemma 2.5 in \cite{DLL1}, we have
$$
\frac{1}{n-1}\mathrm{tr}_\perp Q(t)=\frac{\phi'}{\phi}\leq\frac{\psi'}{\psi},
$$
and
$$
Q_{11}(t)\leq \frac{\phi'_{11}}{\phi_{11}}\leq
\frac{Q_{11}(0)}{1+tQ_{11}(0)}.
$$
Then we obtain
$$
\frac{d}{dt}\log\det P(t)=\mathrm{tr}Q(t)\leq Q_{11}+(n-1)\frac{\psi'}{\psi}.
$$
Consequently, we get
$$
|\det D\Phi_t(\bar{x})|=\det P(t)\leq(1+tQ_{11}(0)) \psi^{n-1}(t),
$$
for all $t\in[0,r]$. Note that
$$
\begin{aligned}
	0&\leq\phi_{11}(t)\leq1+tQ_{11}(0),\\
	0&\leq\phi(t)\leq\psi(t),
\end{aligned}
$$
for all $t\in[0,r]$. Using the arithmetic-geometric mean inequality, we have
\begin{align*}
|\det D\Phi_r(\bar{x})|&\leq(
\frac{1}{nr}+\frac{n-1}{n}
\frac{\psi_2(r)}{\psi_1(r)}+\frac{1}{n}\Delta u(\bar{x}))^nr\psi^{n-1}_1(r),
\end{align*}
for any $\bar{x}\in A_r$. 
It is easy to show that $|Du(\bar{x})|\psi_1(\frac{t}{|Du(\bar{x})|})$ satisfies \eqref{f}. By \eqref{fineq}, one has
$$
|Du(\bar{x})|\psi_1(\frac{t}{|Du(\bar{x})|})
\leq \frac{e^{r_0b_1}-1}{b_1}h_2(t)+e^{r_0b_1}h_1(t).
$$
Note that for $0<c\leq1$, we can derive that
$$
\begin{aligned}
	(h_1(ct)-ch_1(t))'=c(h_1'(ct)-h_1'(t))\leq0,\\
	(h_2(ct)-ch_2(t))'=c(h_2'(ct)-h_2'(t))\leq0.
\end{aligned}
$$
So we have
$$
\psi_1(t)\leq\frac{e^{r_0b_1}-1}{b_1}\frac{h_2(t|Du|)}{|Du|}+e^{r_0b_1}\frac{h_1(t|Du|)}{|Du|}
\leq\frac{e^{r_0b_1}-1}{b_1}h_2(t)+e^{r_0b_1}h_1(t).
$$
Finally, we get
\begin{equation}\label{D-inte}
\begin{aligned}
&|\{q\in M|d(x,q)<r,\forall x\in\Omega \}\setminus\Omega|\\
&\leq\int_{A_r}|\det D\Phi_r|\\
&\leq\int_{A_r}(
\frac{1}{nr}+\frac{n-1}{n}
\frac{\psi_2(r)}{\psi_1(r)}+\frac{1}{n}\Delta u)^n
(\frac{e^{r_0b_1}-1}{b_1}\frac{h_2(r)}{h_1(r)}+e^{r_0b_1})^
{n-1}rh_1
^{n-1}(r).
\end{aligned}
\end{equation}
Similar to (2.20) in \cite{DLL1}, by Lemma \ref{com-2}, one can obtain
$$
|B_n|\theta=\frac{|\{q\in M|d(x,q)<r,\forall x\in\Omega \}\setminus\Omega|}{n\int_0^rh_1(t)^{n-1}dt}.
$$
Dividing \eqref{D-inte} by $n\int_0^rh_1(t)^{n-1}dt$ and letting $r\to\infty$, using Lemma \ref{hessu} and $\lim_{r\to\infty}\frac{h_2}{h_1}\le b_1$, we can get
$$
|B^n|\theta\leq (2e^{r_0b_1}-1)^{n-1}\int_\Omega f^{\frac{n}{n-1}}
\varlimsup_{r\to\infty}\frac{rh_1
^{n-1}(r)}{n\int_0^rh(t)^{n-1}dt}.
$$
Using \eqref{limsup}, we have
$$
\int_\Omega f^{\frac{n}{n-1}}\geq
\frac{|B^n|\theta}{\frac{1+\sqrt{1+4B}}{2}(2e^{r_0b_1}-1)^{n-1}}.
$$
Using the inequality above, we obtain
$$
\int_{\partial \Omega} f+\int_\Omega |Df|+2(n-1)b_1\int_\Omega f
\geq n\Big(\frac{|B^n|\theta}{\frac{1+\sqrt{1+4B}}{2}(2e^{r_0b_1}-1)^{n-1}}\Big)^\frac{1}{n}
\Big(\int_D f^\frac{n}{n-1}\Big)^\frac{n-1}{n},
$$
for $\Omega$ is connect.
It is easy to show that Theorem \ref{thm1.1} is also true for disconnect case.
\end{proof}

\section{Proof of Theorem \ref{thm1.3}}	
In this section, we assume that the ambient space $M$ is a complete Riemannian manifold of dimension $n+p$ with sectional curvature satisfies \eqref{SG} respect to a base point $o\in M$.  Let $\Sigma\subset M$ be a compact submanifold of dimension $n$ (possibly with boundary $\partial\Sigma$), and $f$ is a positive smooth function on $\Sigma$. Let $B$ be the second fundamental form of $\Sigma$, such that
$$
\langle B(X,Y),V\rangle=\langle \bar{D}_XY,V\rangle,
$$
where $X,Y$ are the tangent vector fields on $\Sigma$, $V$ is a normal vector field along $\Sigma$, $\bar{D}$ is the Levi-Civita connection of $M$, $D$ is the induced connection on $\Sigma$. Without loss of generality, we may assume that $\Omega$ is connected.

By scaling, one can assume that
$$
\int_{\partial\Sigma}f+\int_\Sigma\sqrt{|D f|^2+f^2|H|^2}
+(2nb_1+1)\int_\Sigma f=n\int_\Sigma f^{\frac{n}{n-1}}.
$$
Since $\Sigma$ is connected, we can find a solution of the following Neumann boundary problem

\begin{equation*}
	\left\{
	\begin{array}{lr}
		\text{div}(fD u)=nf^\frac{n}{n-1}-(2nb_1+1)f-\sqrt{|D f|^2+f^2|H|^2},& \text{ in }\Sigma,\\
		\\
		\langle Du,\nu\rangle=1,
		& \text{ on }\partial\Sigma,
	\end{array}
	\right.
\end{equation*}
where $\nu$ is the outward unit normal vector field of $\partial\Sigma$ with respect to $\Sigma$. By standard elliptic regularity theory(see Theorem 6.31 in \cite{GT}), we know that $u\in C^{2,\gamma}$ for each $0<\gamma<1$.

As in \cite{Br2}, we define
\begin{equation*}
\begin{aligned}
U:&=\{x\in\Sigma\setminus\partial\Sigma:|Du(x)|<1\},\\
E:&=\{(x,y):x\in U,y\in T^\perp_x\Sigma,|Du(x)|^2+|y|^2<1\}.
\end{aligned}
\end{equation*}
For each $r>0$, we denote $A_r$ be the set of all points $(\bar{x},\bar{y})\in E$ satisfing
$$
ru(x)+\frac{1}{2}{d}
(x,\exp_{\bar{x}}(rDu(\bar{x}))+r\bar{y})^2\geq
ru(\bar{x})+\frac{1}{2}r^2(|Du(\bar{x})|^2+|\bar{y}|^2)
$$
 for all $x\in\Sigma$. For each $r>0$, we define the transport map $\Phi_r:T^\perp\Sigma\to M$ by
$$
\Phi_r(x,y)=\exp_x(rDu(x)+ry)
$$
for all $x\in\Sigma,y\in T^\perp_x\Sigma$. Using the regularity of $u$, we known that $\Phi_r$ is a $C^{1,\gamma}$ map, $0<\gamma<1$.
\\

Similar to Lemma 4.1 in \cite{Br2}, it is easy to show that
\begin{lem}\label{hess-sub}
Assume that $(x,y)\in E$, then we have
$$
\frac{1}{n}(\Delta_\Sigma u(x)-\langle H(x),y\rangle)\leq f^\frac{1}{n-1}(x)-\frac{2nb_1+1}{n}.
$$
\end{lem}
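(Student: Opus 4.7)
The plan is to adapt the argument from Lemma \ref{hessu} to the submanifold setting by combining the tangential Cauchy--Schwarz inequality with its normal counterpart into a single two-vector Cauchy--Schwarz, taking advantage of the joint bound $|Du(x)|^2+|y|^2<1$ available on $E$.

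First I would expand the divergence in the Neumann problem using the product rule on $\Sigma$, namely $\mathrm{div}(fDu)=\langle Df,Du\rangle+f\Delta_\Sigma u$, to obtain
$$
f\Delta_\Sigma u = nf^{n/(n-1)}-(2nb_1+1)f-\sqrt{|Df|^2+f^2|H|^2}-\langle Df,Du\rangle.
$$
Subtracting $f\langle H(x),y\rangle$ from both sides then gives
$$
f\bigl(\Delta_\Sigma u-\langle H,y\rangle\bigr) = nf^{n/(n-1)}-(2nb_1+1)f-\sqrt{|Df|^2+f^2|H|^2}-\bigl(\langle Df,Du\rangle+\langle fH,y\rangle\bigr).
$$

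The crucial step is to bound the last parenthesis from below. I would view $(Df,fH)$ as a vector in $T_x\Sigma\oplus T_x^\perp\Sigma$ with norm $\sqrt{|Df|^2+f^2|H|^2}$, and similarly $(Du,y)$ as a vector in the same space with norm $\sqrt{|Du|^2+|y|^2}$. Cauchy--Schwarz on this direct sum yields
$$
-\bigl(\langle Df,Du\rangle+\langle fH,y\rangle\bigr)\le\sqrt{|Df|^2+f^2|H|^2}\cdot\sqrt{|Du|^2+|y|^2}\le\sqrt{|Df|^2+f^2|H|^2},
$$
where the last inequality uses $(x,y)\in E$, i.e.\ $|Du(x)|^2+|y|^2<1$. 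Substituting this bound cancels the $\sqrt{|Df|^2+f^2|H|^2}$ term exactly, leaving
$$
f\bigl(\Delta_\Sigma u-\langle H,y\rangle\bigr)\le nf^{n/(n-1)}-(2nb_1+1)f,
$$
and dividing by $nf>0$ finishes the proof.

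There is no real obstacle here; the only subtle point is recognising that the extra term $\sqrt{|Df|^2+f^2|H|^2}$ inserted into the right-hand side of the Neumann equation is designed precisely to absorb the combined Cauchy--Schwarz bound on $\langle Df,Du\rangle+\langle fH,y\rangle$, so that the bound $|Du|^2+|y|^2<1$ on $E$ (rather than the weaker $|Du|<1$ used in Lemma \ref{hessu}) is exactly what is needed to close the estimate.
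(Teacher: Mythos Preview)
Your proof is correct and is precisely the standard argument the paper has in mind when it refers to Lemma 4.1 in \cite{Br2}; the paper does not spell out its own proof here, and your Cauchy--Schwarz on $T_x\Sigma\oplus T_x^\perp\Sigma$ together with $|Du|^2+|y|^2<1$ is exactly the intended computation.
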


The following three lemmas are due to Brendle (Lemmas 4.2, 4.3, 4.5 in \cite{Br2}). Their proofs are independent of the curvature assumption of $M$.

\begin{lem}\label{tran-sub}
For each $0\leq\sigma<1$, the set
$$
\{q\in M:\sigma r<d(x,q)<r,\forall x\in\Sigma\}
$$
is contained in the set
$$
\Phi_r(\{(x,y)\in A_r:|Du(x)|^2+|y|^2>\sigma^2\}).
$$
\end{lem}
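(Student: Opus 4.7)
The plan is to follow the variational argument of Brendle's Lemma 4.2 in \cite{Br2}, which is purely first-order and independent of the curvature hypothesis, inserting the normal degree of freedom that is available in codimension $p\ge 1$. Given $q\in M$ with $\sigma r<d(x,q)<r$ for every $x\in\Sigma$, I would minimize the continuous function
$$\Psi(x):=ru(x)+\tfrac{1}{2}d(x,q)^{2}$$
over the compact set $\Sigma$; call a minimizer $\bar x$. The candidate pair $(\bar x,\bar y)$ will be read off from the first-order condition at $\bar x$, with $\bar y\in T^\perp_{\bar x}\Sigma$ absorbing the normal component of the exponential preimage of $q$.

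Let $v:=\exp_{\bar x}^{-1}(q)\in T_{\bar x}M$ be the initial velocity of a minimizing geodesic from $\bar x$ to $q$, so that $|v|=d(\bar x,q)$. The main step is ruling out $\bar x\in\partial\Sigma$: the intrinsic $\Sigma$-gradient of $\tfrac{1}{2}d(\cdot,q)^{2}$ at $\bar x$ is $-v^{T}$, and if $\bar x$ sat on $\partial\Sigma$ the one-sided inward derivative of $\Psi$ in the direction $-\nu$ would have to be nonnegative; combined with the Neumann condition $\langle Du,\nu\rangle=1$ this rearranges to $\langle v,\nu\rangle\ge r$. But $\nu$ is a unit vector and $|v|=d(\bar x,q)<r$ strictly, so Cauchy--Schwarz gives a contradiction. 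Thus $\bar x$ lies in the interior, and it is precisely the strict upper bound $d(x,q)<r$ in the hypothesis that makes this closure step work.

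At this interior minimum the first variation of $\Psi$ on $\Sigma$ forces $rDu(\bar x)=v^{T}$. Splitting $v=v^{T}+v^{\perp}$ and setting $\bar y:=v^{\perp}/r\in T^\perp_{\bar x}\Sigma$ yields $v=rDu(\bar x)+r\bar y$, so
$$\Phi_{r}(\bar x,\bar y)=\exp_{\bar x}\bigl(rDu(\bar x)+r\bar y\bigr)=\exp_{\bar x}(v)=q.$$
The Pythagorean identity $|v|^{2}=r^{2}|Du(\bar x)|^{2}+r^{2}|\bar y|^{2}$ then produces
$$|Du(\bar x)|^{2}+|\bar y|^{2}=\frac{d(\bar x,q)^{2}}{r^{2}}\in(\sigma^{2},1),$$
which places $(\bar x,\bar y)$ in $E$ and supplies the required lower bound $|Du(\bar x)|^{2}+|\bar y|^{2}>\sigma^{2}$.

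For the membership $(\bar x,\bar y)\in A_{r}$, the inequality $\Psi(x)\ge\Psi(\bar x)$ valid for every $x\in\Sigma$, together with the identities $q=\exp_{\bar x}(rDu(\bar x)+r\bar y)$ and $d(\bar x,q)^{2}=r^{2}(|Du(\bar x)|^{2}+|\bar y|^{2})$, rewrites as exactly the defining inequality of $A_{r}$, finishing the argument. The only technical subtlety I would flag is that $d(\cdot,q)^{2}$ is merely semiconcave, but this is handled as in \cite{Br2} by applying the first-order conditions at $\bar x$ along a fixed minimizing geodesic of initial velocity $v$; no change is needed because the sectional curvature hypothesis enters only in the Jacobi/Hessian step carried out in later lemmas, not here.
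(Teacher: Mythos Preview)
Your proof is correct and follows exactly the approach the paper invokes: the paper does not give its own argument but refers to Brendle's Lemma~4.2 in \cite{Br2}, noting that the proof is independent of the curvature assumption. Your variational argument---minimizing $ru(x)+\tfrac12 d(x,q)^2$, excluding boundary minimizers via the Neumann condition and the strict bound $d(\bar x,q)<r$, then splitting $\exp_{\bar x}^{-1}(q)$ into tangential and normal parts to read off $(\bar x,\bar y)$---is precisely Brendle's proof, reproduced accurately.
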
	
\begin{lem}\label{d-mat-sub}
Assume that $(\bar{x},\bar{y})\in A_r$, and let $\bar{\gamma}(t):=\exp_{\bar{x}}(tDu(\bar{x})+t\bar{y})$ for all $t\in[0,r]$. If $Z$ is a smooth vector field along $\bar{\gamma}$ satisfying $Z(0)\in T_{\bar{x}}\Sigma$ and $Z(r)=0$, then
$$
\begin{aligned}
&(D^2u)(Z(0),Z(0))-\langle B(Z(0),Z(0)),\bar{y}\rangle\\
&+\int_0^r\big(|\bar{D}_tZ(t)|^2-\bar{R}(\bar{\gamma}'(t),Z(t),\bar{\gamma}'(t),Z(t))\big)dt\geq0.
\end{aligned}
$$
\end{lem}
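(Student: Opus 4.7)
The plan is to carry out Brendle's second variation argument (Lemma 4.3 of \cite{Br2}) in this submanifold setting. No curvature hypothesis is actually used; the geometric input is the extremal characterisation of $A_r$ combined with a careful accounting of the boundary term that arises from varying the base point inside $\Sigma$ rather than freely in $M$.

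Set $q := \Phi_r(\bar{x},\bar{y}) = \exp_{\bar{x}}(rDu(\bar{x}) + r\bar{y})$. By the defining property of $A_r$, $\bar{x}$ is a minimiser of $x \mapsto ru(x) + \tfrac12 d(x,q)^2$ on $\Sigma$. Testing this inequality at $x = \bar{x}$ and using the orthogonality of $Du(\bar{x}) \in T_{\bar{x}}\Sigma$ and $\bar{y} \in T_{\bar{x}}^\perp\Sigma$ yields $d(\bar{x},q) = r\sqrt{|Du(\bar{x})|^2 + |\bar{y}|^2} = L(\bar{\gamma})$; thus $\bar{\gamma}$ is a minimising geodesic from $\bar{x}$ to $q$. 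I would then let $\eta\colon(-\varepsilon,\varepsilon)\to\Sigma$ be the intrinsic $\Sigma$-geodesic with $\eta(0)=\bar{x}$ and $\eta'(0)=Z(0)$, whose $M$-acceleration is $\bar{D}_s\eta'\big|_{s=0} = B(Z(0),Z(0))$, and construct a variation $\gamma_s\colon[0,r]\to M$ of $\bar{\gamma}$ satisfying $\gamma_s(0) = \eta(s)$, $\gamma_s(r) = q$ and $\partial_s\gamma_s\big|_{s=0} = Z$. An explicit choice is $\gamma_s(t) = \exp_{\bar{\gamma}(t)}(sZ(t) + s^2 W(t))$, with $W$ a smooth vector field along $\bar{\gamma}$ satisfying $W(0) = \tfrac12 B(Z(0),Z(0))$ (to match $\eta(s)$ to second order) and $W(r) = 0$ (to pin $\gamma_s(r) = q$).

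Combining the $A_r$-inequality with $d(\eta(s),q)^2 \leq L(\gamma_s)^2 \leq 2r\,E(\gamma_s)$ (Cauchy--Schwarz) together with the equality $\tfrac12 L(\bar{\gamma})^2 = r\,E(\bar{\gamma})$ valid for the constant-speed geodesic $\bar{\gamma}$, the function
$$F(s) := u(\eta(s)) + E(\gamma_s)$$
satisfies $F(s) \geq F(0)$ with equality at $s=0$, so $F''(0) \geq 0$. Since $\eta$ is a $\Sigma$-geodesic, $\tfrac{d^2}{ds^2} u(\eta(s))\big|_{s=0} = (D^2u)(Z(0),Z(0))$. The standard second variation of energy for the geodesic $\bar{\gamma}$ with variation $Z$ gives
$$\frac{d^2 E(\gamma_s)}{ds^2}\bigg|_{s=0} = \int_0^r \bigl[|\bar{D}_t Z|^2 - \bar{R}(\bar{\gamma}',Z,\bar{\gamma}',Z)\bigr]\,dt - \bigl\langle \eta''(0),\bar{\gamma}'(0)\bigr\rangle,$$
where the contribution at $t=r$ is absent because $\gamma_s(r)\equiv q$. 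At $t=0$ one has $\eta''(0) = B(Z(0),Z(0)) \in T_{\bar{x}}^\perp\Sigma$ and $\bar{\gamma}'(0) = Du(\bar{x}) + \bar{y}$, so $\langle \eta''(0),\bar{\gamma}'(0)\rangle = \langle B(Z(0),Z(0)),\bar{y}\rangle$ since $B(Z(0),Z(0))\perp Du(\bar{x})$. Adding the two second derivatives produces exactly the asserted inequality.

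The main obstacle is this boundary accounting. It is essential that $\eta$ is forced to lie inside $\Sigma$ (rather than being an $M$-geodesic), because this is what pushes its $M$-acceleration into $T^\perp\Sigma$, and the pairing of this normal acceleration with the normal component $\bar{y}$ of $\bar{\gamma}'(0)$ is precisely what produces the second-fundamental-form correction $-\langle B(Z(0),Z(0)),\bar{y}\rangle$ distinguishing Lemma \ref{d-mat-sub} from its ambient counterpart Lemma \ref{d-mat}.
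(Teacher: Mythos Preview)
Your argument is correct and is exactly Brendle's second-variation proof (Lemma~4.3 of \cite{Br2}), which is what the paper invokes without reproducing, noting only that the argument is independent of any curvature assumption on $M$. The one technical wrinkle---your explicit $\gamma_s$ has $\gamma_s(0)$ agreeing with $\eta(s)$ only to second order, so that strictly $F(s)\ge F(0)+O(s^3)$ rather than $F(s)\ge F(0)$---is harmless for concluding $F''(0)\ge 0$, and can in any case be avoided by taking a variation with $\gamma_s(0)=\eta(s)$ exactly.
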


\begin{lem}\label{vani-sub}
	Assume that $(\bar{x},\bar{y})\in A_r$, and let $\bar{\gamma}(t):=\exp_{\bar{x}}(tD u(\bar{x})+t\bar{y})$ for all $t\in[0,r]$. Let $\{e_1,\dots,e_n\}$ be an orthonormal basis of $T_{\bar{x}}\Sigma$. Suppose that $W$ is a Jacobi field along $\bar{\gamma}$ satisfying $W(0)\in T_{\bar{x}}\Sigma$ and $\langle\bar{D}_tW(0),e_j\rangle=(D^2u)(W(0),e_j)-\langle B(W(0),e_j),\bar{y}\rangle$ for each $1\leq j\leq n$. If $W(\tau)=0$ for some $\tau\in(0,r)$, then $W$ vanishes identically.
\end{lem}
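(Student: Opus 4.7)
The plan is to apply Lemma \ref{d-mat-sub} to a truncated version of $W$ and exploit the resulting equality case via a first-variation argument, the standard conjugate-point strategy. Concretely, I would set
$$
Z(t) = \begin{cases} W(t), & t \in [0, \tau],\\ 0, & t \in [\tau, r], \end{cases}
$$
so that $Z(0) = W(0) \in T_{\bar{x}}\Sigma$ and $Z(r) = 0$. After the usual smoothing at the corner $\tau$, this is admissible in Lemma \ref{d-mat-sub}.

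The first step is to show that the quadratic functional
$$
I(Z) := (D^2u)(Z(0),Z(0)) - \langle B(Z(0),Z(0)),\bar{y}\rangle + \int_0^r \bigl(|\bar{D}_t Z|^2 - \bar{R}(\bar{\gamma}',Z,\bar{\gamma}',Z)\bigr)\, dt
$$
vanishes on this $Z$. Integrating by parts on $[0,\tau]$ and using the Jacobi equation together with $W(\tau)=0$ collapses the integral to $-\langle \bar{D}_t W(0), W(0)\rangle$. Since $W(0) \in T_{\bar{x}}\Sigma$, I expand $W(0)=\sum_j \langle W(0),e_j\rangle e_j$ and use the hypothesis on the tangential components of $\bar{D}_t W(0)$ to recognize
$$
\langle \bar{D}_t W(0), W(0)\rangle = (D^2u)(W(0),W(0)) - \langle B(W(0),W(0)),\bar{y}\rangle,
$$
so that all terms cancel and $I(Z)=0$. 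Next, since Lemma \ref{d-mat-sub} forces $I \geq 0$ on every admissible test field, $Z$ is a minimizer. For any smooth $\eta$ with $\eta(0) = \eta(r) = 0$, the first variation of $I$ at $Z$ must vanish; splitting the integral at $\tau$, integrating by parts, and using the Jacobi equation on $[0,\tau]$ together with $Z\equiv 0$ on $[\tau,r]$ reduces this first variation to $2\langle \bar{D}_t W(\tau^-), \eta(\tau)\rangle$. Since $\eta(\tau)$ ranges over all of $T_{\bar{\gamma}(\tau)}M$, we conclude $\bar{D}_t W(\tau)=0$. Together with $W(\tau)=0$ and the linearity of the Jacobi equation, ODE uniqueness then forces $W\equiv 0$.

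I expect the main obstacle to be bookkeeping rather than ideas: one has to handle the mixed initial condition at $t=0$ carefully (the hypothesis constrains only the tangential part of $\bar{D}_t W(0)$, which is precisely what is needed to match the boundary term $\langle \bar{D}_t W(0), W(0)\rangle$ against $(D^2u)(W(0),W(0)) - \langle B(W(0),W(0)),\bar{y}\rangle$), and one must justify applying Lemma \ref{d-mat-sub} to the piecewise smooth $Z$ via a density argument. As the authors observe, no curvature hypothesis on $M$ enters anywhere, so the proof is identical to that of Brendle's Lemma 4.5 in \cite{Br2}.
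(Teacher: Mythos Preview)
Your argument is correct and is precisely the standard broken-Jacobi-field/index-form argument that Brendle uses for Lemma~4.5 in \cite{Br2}, which the paper simply cites rather than reproving. The only technical points---extending Lemma~\ref{d-mat-sub} to piecewise smooth $Z$ by approximation, and matching the boundary term at $t=0$ using only the tangential components of $\bar{D}_tW(0)$---are exactly the ones you flag, and both are routine.
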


\begin{proof}[\textbf{proof of Theorem \ref{thm1.3}}]
For any $r>0$ and $(\bar{x},\bar{y})\in A_r$, let $\{e_i\}_{1\leq i\leq n}$ be any given orthonormal basis in $T_{\bar{x}}\Sigma$. Choose a normal coordinate system $(x^1,\cdots,x^n)$ on $\Sigma$ around $\bar{x}$ such that $\frac{\partial}{\partial x^i}=e_i$ at $\bar{x}\ (1\leq i\leq n)$. Let $\{e_\alpha\}_{n+1\leq \alpha\leq n+p}$ be an orthonormal frame field of $T^\perp\Sigma$ around $\bar{x}$ such that $\big(D^\perp e_\alpha\big)_{\bar{x}}=0$ for $n+1\leq\alpha\leq n+p$, where $D^\perp$ denotes the normal connection in the normal bundle $T^\perp\Sigma$ of $\Sigma$. Any normal vector $y$ around $\bar{x}$ can be written as $y=\sum_{\alpha=n+1}^{n+p}y^\alpha e_\alpha$, and thus $(x^1,\cdots,x^n,y^{n+1},\cdots,y^{n+p})$ becomes a local coordinate system on the total space of the normal bundle $T^\perp\Sigma$.

Let $\bar{\gamma}(t):=\exp_{\bar{x}}(tDu(\bar{x})+t\bar{y})$ for all $t\in[0,r]$. For each $1\leq A\leq n+p$, we define $E_A(t)$ be the parallel transport of $e_A$ along $\bar{\gamma}$. For every $1\leq i\leq n$, let $X_i$ be the Jacobi field along $\bar{\gamma}$ with the initial conditions of $X_i(0)=e_i$ and
$$
\begin{aligned}
	&\langle\bar{D}_tX_i(0),e_j\rangle=(D^2u)(e_i,e_j)-\langle B(e_i,e_j),\bar{y}
	\rangle,\quad 1\leq j\leq n,\\
	&\langle\bar{D}_tX_i(0),e_\beta\rangle=
	\langle B(e_i,Du(\bar{x})),e_\beta
	\rangle,\quad n+1\leq \beta\leq n+p.
\end{aligned}
$$
For any $n+1\leq\alpha\leq n+p$, let $X_\alpha$ be the Jacobi field along $\bar{\gamma}$ with the initial conditions of $X_\alpha(0)=0$ and  
$$
\bar{D}_tX_\alpha(0)=e_\alpha.
$$
Using Lemma $\ref{vani-sub}$, we find that $X_A(t)$ are linearly independent for each $t\in(0,r)$.
Let $P_{(n+p)\times (n+p)}(t)$ be a matrix defined by
$$
	P_{AB}(t)=\langle X_A(t),E_B(t)\rangle,\quad t\in[0,r],
$$
where $1\leq A,B\leq n+p$.
Let $S_{(n+p)\times(n+p)}$ be a matrix defined by
$$
S_{AB}(t)=\bar{R}(\bar{\gamma}'(t),E_A(t),\bar{\gamma}'(t),E_B(t)).
$$
Through the Jacobi equation, it is easy to see that
$$
\begin{aligned}
P''(t)&=-P(t)S(t),\quad
P(0)=\begin{bmatrix}
	\delta_{ij}&0\\
	0&0
\end{bmatrix},
\\
P'(0)&=\begin{bmatrix}
	(D^2u)(e_i,e_j)-\langle B(e_i,e_j),\bar{y}
	\rangle&\langle B(e_i,Du(\bar{x})),e_\beta
	\rangle\\
	0&\delta_{\alpha\beta}
	\end{bmatrix}.
\end{aligned}
$$
Let $Q(t)=P(t)^{-1}P'(t),t\in(0,r)$, a simple computation yields
$$
\frac{d}{dt}Q(t)=-S(t)-Q^2(t),
$$
where $Q(t)$ is symmetric. For the matrices $P(t),Q(t)$, we have the following asymptotic expansions (cf. \cite{Br2})
\begin{equation}\label{Q-asy}
	\begin{aligned}
		P(t)&=\begin{bmatrix}
			\delta_{ij}+O(t)&O(t)\\
			O(t) & t\delta_{\alpha\beta}+O(t^2)
		\end{bmatrix},\\
		Q(t)&=\begin{bmatrix}
			D^2u(e_i,e_j)-\langle B(e_i,e_j),\bar{y}
			\rangle+O(t)&O(1)
			\\
			O(1)&\frac{1}{t}\delta_{\alpha\beta}+O(1)
		\end{bmatrix}
	\end{aligned}
\end{equation}
as $t\to0^+$. The triangle inequality implies that
$$
d(o,\bar{\gamma}(t))\geq\big|d(o,\bar{x})-d(\bar{x},\bar{\gamma}(t))\big|
=\big|d(o,\bar{x})-t|D^\Sigma u(\bar{x})+\bar{y}|\big|.
$$
Set
$$
\Lambda_{\bar{x}}(t):=(|D u(\bar{x})|^2+|\bar{y}|^2)
\lambda(\Big|d(o,\bar{x})-t\sqrt{|D u(\bar{x})|^2+|\bar{y}|^2}\Big|).
$$
For $1\le i\le n$, we have
$$
\left\{
\begin{aligned}
	&Q_{ii}'+Q_{ii}^2\leq\Lambda_{\bar x},\quad t\in(0,r),\\
	\\
	&Q_{ii}(0)=D^2u(\bar{x})(e_i,e_i)-\langle B(e_i,e_i),\bar{y}\rangle.
\end{aligned}
\right.
$$
For $n+1\le\alpha\le n+p$, we have
$$
\left\{
\begin{aligned}
	&Q_{\alpha\alpha}'+Q_{\alpha\alpha}^2\leq\Lambda_{\bar x},\quad t\in(0,r),\\
	\\
	&\lim_{t\to0}Q_{\alpha\alpha}(0)=\frac{1}{t}+O(1).
\end{aligned}
\right.
$$
Let $\psi_1,\psi_2$ denote the solutions of the following problems
$$
\left\{
\begin{array}{l}
	\psi_{1}''= \Lambda_{\bar x}\psi_{1} ,\quad t\in[0,\infty),\\
	\\
	\psi_{1}(0)=0,\psi_{1}'(0)=1,
\end{array} \right.
\quad
\left\{
\begin{array}{l}
	\psi_{2}''= \Lambda_{\bar x}\psi_{2} ,\quad t\in[0,\infty),\\
	\\
	\psi_{2}(0)=1,\psi_{2}'(0)=0.
\end{array} \right.
$$
Similarly to (2.6) in \cite{DLL1}, we have
\begin{equation}\label{psi1/2}
\frac{\psi_2}{\psi_1}(r)\le 2b_1\sqrt{|Du(\bar x)|^2+|\bar y|^2}+\frac1r.
\end{equation}
For $1\le i\le n$, we define $\phi_{i}=e^{\int_0^tQ_{ii}d\tau}>0$, then we have
$$
\left\{
\begin{aligned}
	&\phi_{i}''\leq\Lambda_{\bar x}\phi_{i},\quad t\in(0,r),\\
	&\phi_{i}(0)=1,\phi_{i}'(0)=Q_{ii}(0).
\end{aligned}
\right.
$$
Using Lemma 2.5 in \cite{DLL1}, one can get
$$
\frac{\phi'_{i}}{\phi_{i}}(t)\leq\frac{\psi_2'+Q_{ii}(0)\psi_1'}{\psi_2+Q_{ii}(0)\psi_1}(t).
$$
For $n+1\le\alpha\le n+p$, we define  $\phi_\alpha=te^{\int_0^tQ_{\alpha\alpha}-\frac{1}{\tau}~d\tau}$, then we have
$$
\left\{
\begin{aligned}
	&\phi_{\alpha}''\leq\Lambda_{\bar x}\phi_{\alpha},\quad t\in(0,r),\\
	&\phi_{\alpha}(0)=0,\phi_{\alpha}'(0)=1.
\end{aligned}
\right.
$$
By Lemma 2.1 in \cite{PRS}, one can get
$$
\frac{\phi'_\alpha}{\phi_\alpha}(t)\leq\frac{\psi_1'}{\psi_1}(t).
$$
Then we have
$$
\frac{d}{dt}\log\det P(t)=\text{tr}Q(t)\le n\frac{\psi_2'+Q_{ii}(0)\psi_1'}{\psi_2+Q_{ii}(0)\psi_1}(t)+p\frac{\psi_1'}{\psi_1}(t).
$$
Integrating the above inequality, we get
\begin{equation}\label{eq1}
\begin{aligned}
	\det P(1)&\le
	\psi^p_1(1)\Big(\psi_2(1)+Q_{ii}(0)\psi_1(1)\Big)^n
	\lim_{\varepsilon\to0}\frac{\det P(\varepsilon)}{\psi^p_1(\varepsilon)\Big(\psi_2(\varepsilon)+Q_{ii}(0)\psi_1(\varepsilon)\Big)^n}\\
	&=\psi^p_1(1)\Big(\psi_2(1)+Q_{ii}(0)\psi_1(1)\Big)^n.
\end{aligned}
\end{equation}
To get a better estimate on $|\det\bar D\Phi_r(\bar x,\bar y)|$ when $r$ is sufficiently large, we need a special basis. When $|Du(\bar{x})|\neq0,|\bar{y}|\neq0$, we set
$$
\begin{aligned}
	\alpha&= \frac{|Du|}{\sqrt{|Du|^2+|y|^2}},\quad \beta=\frac{|\bar{y}|}{\sqrt{|Du|^2+|y|^2}},
	\\
	{e}_1&=\frac{Du(\bar{x})}{|Du(\bar{x})|},\quad
	{e}_{n+1}=\frac{\bar{y}}{|\bar{y}|}.\\
\end{aligned}
$$
Denote $\bar{e}_1=\alpha e_1+\beta e_{n+1},\bar e_{n+1}=-\beta e_1+\alpha e_{n+1}$. In this basis, we have $S_{11}=0$ and
$$
\begin{aligned}
	\lim_{t\to0}Q_{11}(0)&=\frac{\beta^2}{t}+O(1),\quad
	\lim_{t\to0}Q_{(n+1)(n+1)}(t)=\frac{\alpha^2}{t}+O(1),\\
	\lim_{t\to0}Q_{\alpha\alpha}(t)&=\frac{1}{t}+O(1),
	n+2\le\alpha\le n+p.
\end{aligned}
$$
For $2\leq i\leq n$, we have
$$
\left\{
\begin{aligned}
	&Q'_{11}+Q_{11}^2\leq0,\quad t\in(0,r),\\
	&\lim_{t\to 0}Q_{11}(t)= \frac{\beta^2}{t}+O(1),
\end{aligned}
\right.
\quad
\left\{
\begin{aligned}
	&Q_{ii}'+Q_{ii}^2\leq\Lambda_{\bar x},\quad t\in(0,r),\\
	\\
	&Q_{ii}(0)=D^2u(\bar{x})(e_i,e_i)-\langle B(e_i,e_i),\bar{y}\rangle.
\end{aligned}
\right.
$$
For $n+2\leq\alpha\leq n+p$, we have
$$
\left\{
\begin{aligned}
	&Q'_{(n+1)(n+1)}+Q_{(n+1)(n+1)}^2\leq \Lambda_{\bar x},\quad t\in(0,r),\\
	&\lim_{t\to 0}Q_{(n+1)(n+1)}(t)= \frac{\alpha^2}{t}+O(1),
\end{aligned}
\right.
\quad
\left\{
\begin{aligned}
	&Q_{\alpha\alpha}'+Q_{\alpha\alpha}^2\leq\Lambda_{\bar x},\quad t\in(0,r),\\
	&\lim_{t\to0}Q_{\alpha\alpha}(0)=\frac{1}{t}+O(1).
\end{aligned}
\right.
$$
Similar to the proof of \eqref{eq1}, by Lemma 2.1 in \cite{DLL2} and Lemma 2.5 in \cite{DLL1}, one can get
$$
\frac{d}{dt}\log\det P(t)=\text{tr}Q(t)\le\frac1t+p\frac{\psi_1'}{\psi_1}+\frac{\psi_2'+Q_{ii}(0)\psi_1'}{\psi_2+Q_{ii}(0)\psi_1}.
$$
Set $r>1$, using \eqref{psi1/2} and \eqref{eq1}, we can derive
\begin{equation}\label{eop}
	\begin{aligned}
		|\det\bar D\Phi_r(\bar x,\bar y)|&\le\frac{r\psi_1^{p}(r)\prod_{i=2}^{n}(\psi_2(r)+Q_{ii}(0)\psi_1(r))}{\psi_1^{p}(1)\prod_{i=2}^{n}(\psi_2(1)+Q_{ii}(0)\psi_1(1))}|\det\bar D\Phi_1(\bar x,\bar y)|\\
		&=r\psi_1^p(r)(\psi_2(1)+Q_{11}(0)\psi_1(1))\prod_{i=2}^{n}(\psi_2(r)+Q_{ii}(0)\psi_1(r))\\
		&\le r\psi_1^{n+p-1}(r)\psi_1(1)(2b_1+1+Q_{11}(0))\prod_{i=2}^{n}(2b_1+\frac1r+Q_{ii}(0))\\
		&\le r\psi_1^{n+p-1}(r)\psi_1(1)(\frac{2nb_1+1+\Delta u(\bar x)-\langle H(\bar{x}),\bar{y}\rangle}{n}+\frac{(n-1)}{nr})^n.
	\end{aligned}
\end{equation}
For the case of $|Du(\bar x)|=0,|\bar y|\ne0$ and $|Du(\bar x)|\ne0,|\bar y|=0$, we can also get \eqref{eop} with the similar way. By Lemma \ref{tran-sub} and the formula for change of variables in multiple integrals, for $0\le\sigma<1,r>1$, we find that
\begin{equation*}
	\begin{aligned}
		&\{q\in M:\sigma r<d(x,q)<r,\forall x\in\Sigma\}\\
		&\leq\int_{U}\Big(\int_{\{y\in T^\perp_x\Sigma:
			\sigma^2<|Du(x)|^2+|y|^2<1\}}
		|\det \bar{D}\Phi_r(x,y)|1_{{A_r}(x,y)}dy \Big)d\mathrm{vol}(x)\\
		&\leq\int_{U}\Big(\int_{\{y\in T^\perp_x\Sigma:
			\sigma^2<|Du(x)|^2+|y|^2<1\}}
		r\psi_1^{n+p-1}(r)\psi_1(1)(\frac{2nb_1+1+\Delta u(x)-\langle H(x),y\rangle}{n}+\frac{(n-1)}{nr})^n
		d\mathrm{vol}(x)\\
		&=|B^p|r\psi_1^{n+p-1}(r)\psi_1(1)\int_{\Sigma}
		[(1-|Du(x)|^2)^{\frac{p}{2}}-(\sigma^2-|Du(x)|^2)_+^{\frac{p}{2}})]\cdot(\frac{n-1}{nr}+f^{\frac{1}{n-1}}({x}))^nd\mathrm{vol}(x).
	\end{aligned}
\end{equation*}
Then we have
\begin{equation}\label{x19}
	\begin{aligned}
		&\{q\in M:\sigma r<d(x,q)<r,\forall x\in\Sigma\}\\
		&\leq\frac{p}{2}|B^p|(1-\sigma^2)r\psi_1^{n+p-1}(r)\psi_1(1)
		\int_{\Sigma}(\frac{n-1}{nr}+f^{\frac{1}{n-1}})d\mathrm{vol}(x).
	\end{aligned}
\end{equation}
Similar to the proof of (3.23) in \cite{DLL1}, we have
\begin{equation}\label{x20}
	\begin{aligned}
		&\lim_{r\to+\infty}\frac{|\{p\in M:\sigma r<d(x,p)<r,\forall x\in\Sigma\}|}{(n+p)\int_0^rh_1^{n+p-1}dt}\\
		&=|B^{n+p}|\theta\lim_{r\to+\infty}
		(1-\sigma\frac{h_1^{n+p-1}(\sigma r)}{h_1^{n+p-1}(r)})\\
		&\ge|B^{n+p}|(1-\sigma^{n+p})\theta.
	\end{aligned}
\end{equation}
Dividing \eqref{x19} by ${(n+p)}\int_0^rh_1(t)^{n+p-1}dt$ and sending $r\to\infty$, we have
\begin{equation*}
	\begin{aligned}
		&|B^{n+p}|(1-\sigma^{n+p})\theta\le\lim_{r\to\infty}\frac{|\{p\in M:\sigma r<d(x,p)<r,\forall x\in\Sigma\}|}{(n+p)\int_0^rh_1^{n+p-1}(t)dt}\\
		&\leq \frac{p}{2}|B^p|(1-\sigma^2)\psi_1(1)\int_\Sigma f^{\frac{n}{n-1}}
		\lim_{r\to\infty}\frac{r\psi_1^{n+p-1}(r)}{(n+p)\int_0^rh_1(t)^{n+p-1}dt}\\
		&\le \frac{p}{2}|B^p|(1-\sigma^2)(\frac{e^{2b_1}-1}{2b_1})
		(2e^{r_0b_1}-1)^{n+p-1}\int_\Sigma f^{\frac{n}{n-1}}
		\lim_{r\to\infty}\frac{rh_1^{n+p-1}(r)}{(n+p)\int_0^rh_1(t)^{n+p-1}dt}\\
		&=\frac{p}{2}|B^p|(1-\sigma^2)(\frac{e^{2b_1}-1}{2b_1})
		(2e^{r_0b_1}-1)^{n+p-1}\frac{1+\sqrt{1+4B}}{2}\int_\Sigma
		f^{\frac{n}{n-1}}.
	\end{aligned}
\end{equation*}
Let $\sigma\to1$, one has
\begin{equation}\label{x22}
	(n+p)|B^{n+p}|\theta\leq p|B^p|(\frac{e^{2b_1}-1}{2b_1})
	(2e^{r_0b_1}-1)^{n+p-1}\frac{1+\sqrt{1+4B}}{2}
	\int_\Sigma f
	^{\frac{n}{n-1}}d\mathrm{vol},
\end{equation}
which implies
$$
\begin{aligned}
	&\int_{\partial\Sigma}f+\int_\Sigma\sqrt{|D f|^2+f^2|H|^2}
	+(2nb_1+1)\int_\Sigma f\\
	&\geq
	n\Big(\frac{2b_1(n+p)|B^{n+p}|\theta}{(e^{2b_1}-1)p|B^p|(2e^{r_0b_1}-1)^{n+p-1}\frac{1+\sqrt{1+4B}}{2}}\Big)^{\frac{1}{n}}
	\Big(\int_\Sigma f^{\frac{n}{n-1}}\Big) ^{\frac{n-1}{n}}.
\end{aligned}
$$
\end{proof}

\bibliographystyle{plain}

\bibliography{DLLpoly}

\noindent\mbox{Tian Chong} \\
\mbox{Department of Mathematics,
Shanghai Polytechnic University}\\
\mbox{2360 Jin Hai Road, Pudong District}\\
\mbox{Shanghai, 201209}\\
\mbox{P.R. China}\\
\mbox{\textcolor{blue}{chongtian@sspu.edu.cn}}

\

\noindent\mbox{Han Luo} \\
\mbox{School of Mathematics and Statistics, Nanjing University of Science
and Technology}\\
\mbox{200 Xiaolingwei Street}\\
\mbox{Nanjing, 210094}\\
\mbox{P.R. China}\\
\mbox{\textcolor{blue}{luohan@njust.edu.cn}}

\

\noindent\mbox{Lingen Lu}\\
\mbox{School of Mathematics and Statistics, Hainan University}\\
\mbox{58 Renmin Avenue, Haikou City}\\
\mbox{Hainan, 570228}\\
\mbox{P.R. China}\\
\mbox{\textcolor{blue}{ lulingen@hainanu.edu.cn}}

\end{document}